\newtheorem{theorem}{Theorem}[section]
\newtheorem{lemma}[theorem]{Lemma}
\newtheorem{proposition}[theorem]{Proposition}
\newtheorem{corollary}[theorem]{Corollary}
\theoremstyle{definition}
\theoremstyle{remark}
\newtheorem{remark}[theorem]{Remark}
\numberwithin{equation}{section}
\begin{document}

\title[Pluri-canonical Maps in Positive Characteristic]
{Pluri-canonical Maps of Varieties of Maximal Albanese Dimension in
Positive Characteristic}

\author{Yuchen Zhang}
\address{Department of Mathematics, University of Utah, Salt Lake City, Utah 84102}
\curraddr{155 S 1400 E Room 233, Salt Lake City, Utah 84112}
\email{yzhang@math.utah.edu}



\date{\today}


\keywords{Maximal Albanese dimension, pluri-canonical map, test
ideal, characteristic $p>0$}

\begin{abstract}
We show that if $X$ is a nonsingular projective variety of general
type over an algebraically closed field $k$ of positive
characteristic and $X$ has maximal Albanese dimension and the
Albanese map is separable, then $|4K_X|$ induces a birational map.
\end{abstract}

\maketitle



\section{Introduction}

Let $X$ be a nonsingular projective variety of general type over an
algebraically closed field $k$, $n=\dim X$ and $K_X$ be a canonical
divisor. Since $K_X$ is big, for any sufficiently large positive
integer $m$, the linear series $|mK_X|$ induces a birational map. It
is an important problem to bound this integer $m$. For
$\text{char}\; k=0$, by a result of \cite{HM}, \cite{Ta}, \cite{T06}
and \cite{T07}, there exists a non-effective bound for $m$ which
only depends on the dimension $n$. When $X$ has maximal Albanese
dimension, \cite{CH} and \cite{JLT} show the optimal result that
$|3K_X|$ is birational. Furthermore, \cite{CH} shows that if the
Albanese dimension is $n-1$, then $|6K_X|$ is birational, and if the
Albanese dimension is $n-2$, then $|7K_X|$ is birational.

In this paper, we will generalize these results to positive
characteristic.

\begin{theorem}
(See Theorem \ref{main}) Let $X$ be a smooth projective variety of
general type over an algebraically closed field $k$ of
characteristic $p>0$. If $X$ has maximal Albanese dimension and the
Albanese map is separable, then $|4K_X|$ induces a birational map.
\end{theorem}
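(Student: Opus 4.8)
The plan is to transplant the characteristic-zero strategy of Chen--Hacon and Jiang--Lahoz--Tirabassi, replacing multiplier ideals by test ideals $\tau(X,\Delta)$, Nadel-type vanishing by the lifting theorem for Frobenius-stable sections $S^0$, and the Green--Lazarsfeld generic vanishing theorem by its positive-characteristic analogue for $S^0 a_*\omega_X$ due to Hacon--Patakfalvi. A first, routine reduction: to prove $|4K_X|$ birational it suffices to show it separates two \emph{general} points $x_1,x_2\in X$, since this makes $\varphi_{|4K_X|}$ generically injective, hence birational onto its image, and the locus of pairs where separation fails is proper closed. So the goal becomes: for general $x_1,x_2$, produce a section of $4K_X$ vanishing at $x_2$ but not at $x_1$.

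The obstruction to doing this directly is that $K_X$, though big, need not be positive enough to isolate a singular point, and overcoming this is where the Albanese morphism $a\colon X\to A$ --- generically finite by maximal Albanese dimension, separable by hypothesis --- is used. I would pass to the fibre product $X_N=X\times_{A,[N]}A$ along multiplication $[N]\colon A\to A$ for an integer $N$ prime to $p$, so that $\pi_N\colon X_N\to X$ is \'etale with Galois group $A[N]$, $K_{X_N}=\pi_N^*K_X$, and
\[
\operatorname{vol}(K_{X_N})=N^{2\dim A}\operatorname{vol}(K_X)\longrightarrow\infty\qquad(N\to\infty),
\]
while $\dim X_N=\dim X$ is fixed. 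For $N\gg0$, the large volume of the big divisor $K_{X_N}$ lets one build an effective $\mathbb Q$-divisor $\Delta_N$, numerically a small multiple of $K_{X_N}$, whose non-$F$-pure locus --- the co-support of $\tau(X_N,\Delta_N)$ --- is, after the usual perturbation and tie-breaking, exactly $\pi_N^{-1}(x_1)\cup\pi_N^{-1}(x_2)$ with the reduced structure. Carrying out this construction $A[N]$-equivariantly (legitimate because $p\nmid N$) and applying the Hacon--Patakfalvi lifting theorem for $S^0$ on $X_N$, one obtains a section of a small multiple of $K_{X_N}$ vanishing at all of $\pi_N^{-1}(x_2)$ but not at $\pi_N^{-1}(x_1)$; projecting to an eigencomponent of the $A[N]$-action and descending to $X$ yields: $|mK_X\otimes P|$ separates $x_1$ and $x_2$, for $P$ ranging over a Zariski-dense set of torsion classes in $\operatorname{Pic}^0(A)$ (and, for general $x_1,x_2$, over a dense open subset).

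To remove the twist one uses the Chen--Hacon device: for general $P$ in that dense open, pick $B\in|K_X\otimes P^{-1}|$ passing through neither $x_1$ nor $x_2$ --- such $B$ exists for general $x_1,x_2$ provided $S^0(X,\omega_X\otimes P^{-1})\neq0$, which in turn follows from the Hacon--Patakfalvi generic vanishing theorem, namely that $S^0 a_*\omega_X$ is a $GV$-sheaf on $A$ (with the degenerate case $\chi(S^0 a_*\omega_X)=0$ treated separately via the generic-vanishing structure theory). Adding $B$ to a separating section of $mK_X\otimes P$ produces a section of $(m+1)K_X$ separating $x_1$ and $x_2$. The arithmetic of the positivity needed in the $S^0$-lifting step --- which, absent Kodaira vanishing, is one copy of $K_X$ costlier than Nadel vanishing in characteristic zero --- pushes the value to $m=3$, and hence gives $|4K_X|$ rather than the optimal characteristic-zero bound $|3K_X|$.

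The essential new difficulty compared to characteristic zero, and what I would expect to be the main obstacle, is the control of $S^0$. In characteristic zero every section of a multiplier ideal sheaf twisted by $\omega_X\otimes(\text{nef and big})$ lifts, and $\omega_X$ twisted by $\operatorname{Pic}^0$ is always continuously globally generated off the exceptional locus; in characteristic $p$ only the Frobenius-stable sections lift, and \emph{a priori} $S^0$ could vanish identically. Showing that $S^0 a_*\omega_X$ is nonzero and positive enough --- along the \'etale covers, after the torsion twists, and compatibly with the construction of $\Delta_N$ --- is the heart of the argument, and this is exactly where the separability of the Albanese map is indispensable (inseparability would destroy the behaviour of the Cartier operator and of Frobenius pushforwards on which everything rests). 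A secondary burden is keeping the numerical bookkeeping tight enough --- running the whole argument $A[N]$-equivariantly and being economical with the twist --- to reach the clean bound $4$.
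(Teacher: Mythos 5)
Your opening reduction is already false in characteristic $p$: separating general pairs of closed points makes $\varphi_{|4K_X|}$ generically injective, but generic injectivity only forces the induced extension of function fields to be purely inseparable, not trivial (the Frobenius morphism separates all points and has degree $p^{\dim X}$). To conclude birationality you must also show the map is generically unramified, i.e.\ that $|4K_X|$ separates tangent vectors at a general point; this is exactly why the paper spends Steps 6--8 producing, for general $x$ and any length-two subscheme $z$ supported at $x$, a section of $\mathcal{O}_X(4K_X)$ vanishing at $x$ but not on $z$. Your proposal omits this entirely, so even granting every other step it does not prove the theorem.

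Beyond that, your route is genuinely different from the paper's and leans on inputs that are themselves unproved here. The paper never passes to the covers $X\times_{A,[N]}A$, never constructs a boundary whose test ideal has prescribed co-support, and never invokes $S^0$ or a $GV$-theorem for $S^0a_*\omega_X$. It fixes $\Delta=(1-\epsilon)K_X+\epsilon E$ with $K_X\sim_{\mathbb{Q}}H+E$, $H$ ample, sets $\mathcal{F}_r=\mathcal{O}_X((r+1)K_X)\otimes\tau(\|r\Delta\|)$, and uses the trace surjection $F^e_*\mathcal{F}_{r,e}\to\mathcal{F}_r$ from a sheaf whose ample part grows like $p^e\epsilon H$, so that Serre/Fujita vanishing alone yields the $IT_0$ condition needed for the Fourier--Mukai lemma. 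The step you delegate to a Chen--Hacon divisor $B\in|K_X\otimes P^{-1}|$ --- whose existence via nonvanishing of $S^0(X,\omega_X\otimes P^{-1})$, especially when $\chi=0$, is precisely the sort of statement that is delicate in characteristic $p$ and is nowhere established in your sketch --- is handled in the paper by using separability to write $K_X=R_f\geqslant 0$ as the ramification divisor of the Albanese map and feeding the resulting surjection $F^e_*\mathcal{F}_{1,e}^-\to k(x)$ back into the Fourier--Mukai lemma. Likewise, the tie-breaking construction of $\Delta_N$ with co-support exactly $\pi_N^{-1}(x_1)\cup\pi_N^{-1}(x_2)$ is a characteristic-zero multiplier-ideal technique whose test-ideal analogue you would have to prove, not cite, and the assertion that the bookkeeping ``pushes the value to $m=3$'' is not an argument. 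As written, the proposal is a plausible research program rather than a proof.
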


Our strategy is similar to that in \cite{CH} where the Fourier-Mukai
transform (Lemma \ref{hacon}) is used repeatedly to produce sections
of $mK_X$. Their approach uses multiplier ideals and
Kawamata-Viehweg vanishing in an essential way. In positive
characteristic, the theory of Fourier-Mukai transforms still
applies, and multiplier ideals can be replaced by test ideals.
However, Kawamata-Viehweg vanishing is known to fail. Inspired by
\cite{H}, \cite{M} and \cite{S}, we replace Kawamata-Viehweg
vanishing by the Frobenius map and Serre vanishing. Combining this
with the Fourier-Mukai transform, we obtain that $|4K_X|$ is birational.
It seems that new ideas are required to investigate the third
pluri-canonical map.

\subsection*{Ackowledgements} The author would like to thank his advisor
Professor Christopher Hacon for suggesting this problem and many
useful discussions. The author would also like to thank the referee
for pointing out several mistakes in the previous version of this
paper.

\section{Asymptotic Test Ideals}

Suppose that $X$ is a smooth $n$-dimensional variety over an
algebraically closed field $k$ of characteristic $p>0$. Let
$\omega_X$ denote the canonical line bundle on $X$. We denote
$F:X\rightarrow X$ the absolute Frobenius morphism, that is given by
the identity on the topological space, and by taking the $p$-th
power on regular functions. Let $\text{Tr}:F_*\omega_X\rightarrow
\omega_X$ be the trace map and $\text{Tr}^e:F_*^e
\omega_X\rightarrow \omega_X$ be the $e$-th iteration of the trace
map.

We follow the definitions given in \cite{M}. For other equivalent
definitions, see \cite{BMS} and \cite{S}. Given a nonzero ideal
$\mathfrak{a}$ in $\mathcal{O}_X$, the image
$\text{Tr}^e(\mathfrak{a}\cdot\omega_X)$ can be written as
$\mathfrak{a}^{[1/p^e]}\cdot\omega_X$ for some ideal
$\mathfrak{a}^{[1/p^e]}$ in $\mathcal{O}_X$. Given a positive real
number $\lambda$, one can show that
$$\left(\mathfrak{a}^{\lceil\lambda p^e\rceil}\right)^{[1/p^e]}
\subseteq\left(\mathfrak{a}^{\lceil\lambda
p^{e+1}\rceil}\right)^{[1/p^{e+1}]}$$ for every $e\geqslant 1$ where
$\lceil t\rceil$ means the smallest integer $\geqslant t$. Hence,
there is an ideal $\tau(\mathfrak{a}^\lambda)$, called the
\textbf{test ideal} of $\mathfrak{a}$ of exponent $\lambda$, that is
equal to $\left(\mathfrak{a}^{\lceil \lambda
p^e\rceil}\right)^{[1/p^e]}$ for all $e$ large enough.

Test ideals have many similar properties to multiplier ideals.
If $\mathfrak{a}\subseteq\mathfrak{b}$, then
$\tau(\mathfrak{a}^\lambda)\subseteq\tau(\mathfrak{b}^\lambda)$ for
all $\lambda\geqslant 0$. If $m$ is a positive integer, then
$\tau(\mathfrak{a}^{m\lambda})=\tau((\mathfrak{a}^m)^\lambda)$.

One can also define an asymptotic version of test ideals similar to
asymptotic multiplier ideals. Suppose that $\mathfrak{a}_\bullet$ is
a graded sequence of ideals on $X$
($\mathfrak{a}_m\cdot\mathfrak{a}_n\subseteq\mathfrak{a}_{m+n}$) and
$\lambda$ is a positive real number. If $m$ and $l$ are two positive
integers such that $\mathfrak{a}_m$ is nonzero, then
$$\tau(\mathfrak{a}_m^{\lambda/m})=\tau((\mathfrak{a}_m^l)^{\lambda/ml})
\subseteq\tau(\mathfrak{a}_{ml}^{\lambda/ml}).$$ By the Noetherian
property, there is a unique ideal
$\tau(\mathfrak{a}_\bullet^\lambda)$, called the \textbf{asymptotic
test ideal} of $\mathfrak{a}_\bullet$ of exponent $\lambda$, such
that
$\tau(\mathfrak{a}_\bullet^\lambda)=\tau(\mathfrak{a}_m^{\lambda/m})$
for all $m$ large enough and sufficiently divisible. 


For linear series, let $D$ be a Cartier divisor on $X$ such that
$h^0(X,\mathcal{O}_X(mD))\neq 0$ for some positive integer $m$. We
then define $\tau(\lambda\cdot
\|D\|)=\tau(\mathfrak{a}_\bullet^\lambda)$ where $\mathfrak{a}_m$ is
the base ideal of the linear series $|mD|$. Then by definition,
$\tau(\lambda/r\cdot\|rD\|)=\tau(\lambda\cdot\|D\|)$ for every
positive integer $r$. If $D$ is a $\mathbb{Q}$-divisor such that
$h^0(X,\mathcal{O}_X(mD))\neq 0$ for some positive integer $m$
satisfying that $mD$ is Cartier, then we put
$\tau(\lambda\cdot\|D\|)=\tau(\lambda/r\cdot\|rD\|)$ for some $r>0$
such that $rD$ is Cartier. 


\section{Fourier-Mukai Transform}

We recall some facts about the Fourier-Mukai Transform from
\cite{M81}. Let $A$ be an abelian variety of dimension $g$,
$\hat{A}$ be the dual abelian variety and $P$ be the normalized
Poincar\'{e} line bundle on $A\times\hat{A}$. For any
$\hat{a}\in\hat{A}$, we let $P_{\hat{a}}=P|_{A\times \hat{a}}$. The
\textbf{Fourier-Mukai functor} $R\hat{S}:D(A)\rightarrow D(\hat{A})$
is given by
$R\hat{S}(\mathcal{F})=Rp_{\hat{A},*}(p_A^*\mathcal{F}\otimes P)$.
There is a corresponding functor $RS:D(\hat{A})\rightarrow D(A)$
such that $RS\circ R\hat{S}=(-1_A)^*[-g]$ and $R\hat{S}\circ
RS=(-1_{\hat{A}})^*[-g]$.

We will need the following result.

\begin{proposition}\label{hacon}
Let $\mathcal{F}$ be a non-zero coherent sheaf on $A$ such that
$H^i(A, \mathcal{F}\otimes P_{\hat{a}})=0$ for all
$\hat{a}\in\hat{A}$ and all $i>0$. If $\mathcal{F}\rightarrow k(a)$ is a
surjective morphism for some $a\in A$, then the induced map $H^0(A,
\mathcal{F}\otimes P_{\hat{a}})\rightarrow k(a)$ is surjective for
general $\hat{a}\in\hat{A}$.
\end{proposition}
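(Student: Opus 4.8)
The plan is to translate the statement, through the Fourier--Mukai equivalence, into an assertion about the cokernel of a morphism of sheaves on $\hat{A}$, and then to invoke cohomology and base change. First I would reduce the problem to a vanishing statement. Set $\mathcal{G}=\ker(\mathcal{F}\to k(a))$, so that $0\to\mathcal{G}\to\mathcal{F}\to k(a)\to 0$ is a short exact sequence of coherent sheaves on $A$. Twisting by the line bundle $P_{\hat{a}}$ is exact and $k(a)\otimes P_{\hat{a}}\cong k(a)$ as a sheaf, so the long exact cohomology sequence shows that the induced map $H^0(A,\mathcal{F}\otimes P_{\hat{a}})\to k(a)$ is surjective whenever $H^1(A,\mathcal{G}\otimes P_{\hat{a}})=0$. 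Hence it suffices to prove that $H^1(A,\mathcal{G}\otimes P_{\hat{a}})=0$ for general $\hat{a}\in\hat{A}$.

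Next I would apply $R\hat{S}$ to the short exact sequence. The hypothesis $H^i(A,\mathcal{F}\otimes P_{\hat{a}})=0$ for all $\hat{a}$ and all $i>0$ implies, by cohomology and base change, that $R\hat{S}(\mathcal{F})$ is a single coherent sheaf $\hat{\mathcal{F}}$ placed in degree $0$ (in fact locally free, though this is not needed), while a direct computation gives $R\hat{S}(k(a))=P_a:=P|_{\{a\}\times\hat{A}}$, a line bundle on $\hat{A}$, again in degree $0$. Taking the long exact sequence of cohomology sheaves of the triangle $R\hat{S}(\mathcal{G})\to R\hat{S}(\mathcal{F})\to R\hat{S}(k(a))\to R\hat{S}(\mathcal{G})[1]$ shows that $R^i\hat{S}(\mathcal{G})=0$ for $i\neq 0,1$ and yields an exact sequence
\[
0\to R^0\hat{S}(\mathcal{G})\to\hat{\mathcal{F}}\xrightarrow{\ \varphi\ }P_a\to R^1\hat{S}(\mathcal{G})\to 0 .
\]
Since $R\hat{S}$ is an equivalence (up to the shift $[-g]$ and $(-1)^*$) and $\mathcal{F}\to k(a)$ is a nonzero morphism in $D(A)$, its image $\varphi$ is a nonzero morphism $\hat{\mathcal{F}}\to P_a$; as $P_a$ is invertible on the integral variety $\hat{A}$, the image of $\varphi$ is a rank-one subsheaf, so $R^1\hat{S}(\mathcal{G})=\operatorname{coker}\varphi$ is a torsion sheaf, supported on a proper closed subset $Z\subsetneq\hat{A}$.

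Finally I would run cohomology and base change in the opposite direction. For $\hat{a}\notin Z$ the derived restriction $-\otimes^{L}_{\mathcal{O}_{\hat{A}}}k(\hat{a})$ annihilates $R^1\hat{S}(\mathcal{G})$ together with all of its higher Tor-sheaves, so the truncation triangle for $R\hat{S}(\mathcal{G})$ gives $R\hat{S}(\mathcal{G})\otimes^{L}_{\mathcal{O}_{\hat{A}}}k(\hat{a})\cong R^0\hat{S}(\mathcal{G})\otimes^{L}_{\mathcal{O}_{\hat{A}}}k(\hat{a})$, a complex concentrated in non-positive cohomological degrees. Since cohomology and base change identifies this complex with $R\Gamma(A,\mathcal{G}\otimes P_{\hat{a}})$, we conclude $H^1(A,\mathcal{G}\otimes P_{\hat{a}})=0$ for every $\hat{a}\notin Z$, which is what was needed.

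The step I expect to be the main obstacle is this last one: one must check carefully that the passage from ``$R^1\hat{S}(\mathcal{G})$ has proper support'' to ``$H^1$ of the general twist vanishes'' really goes through, i.e.\ that no Tor-contribution coming from $R^0\hat{S}(\mathcal{G})$ can land in cohomological degree $1$ --- which is precisely why it matters that $R^0\hat{S}(\mathcal{G})$ occupies a single non-positive degree. One also has to ensure that $\varphi$ is genuinely nonzero, and this is exactly where the full strength of $R\hat{S}$ being an equivalence enters.
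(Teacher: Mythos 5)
Your argument is correct: the reduction to $H^1(A,\mathcal{G}\otimes P_{\hat a})=0$, the identification of $\operatorname{coker}\varphi$ with $R^1\hat S(\mathcal{G})$ via the exact triangle, the observation that a nonzero map to the line bundle $P_a$ has torsion cokernel, and the final derived base-change step (where the vanishing $R^i\hat S(\mathcal{G})=0$ for $i\geqslant 2$ indeed rules out any Tor contribution to degree $1$) all go through. The paper itself offers no proof beyond the citation of \cite[2.1]{H}, and your argument is essentially the standard Fourier--Mukai/base-change proof given there, so there is nothing to add.
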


\begin{proof}
See \cite[2.1]{H}.
\end{proof}

\begin{remark}
By $k(a)$, we mean the trivial skyscraper sheaf supported on $a$. If
$F:A\rightarrow A$ is the absolute Frobenius map on $A$, then
$F_*k(a)$ is a skyscraper sheaf with fiber isomorphic to $k$
supported on $a$. But this $k$ is not the trivial vector space over
$k$. Hence, $F_*k(a)\neq k(a)$.
\end{remark}

\section{Vanishing Theorems}

Suppose $f:X\rightarrow A$ is a nontrivial morphism where $X$ is a
smooth variety of general type over an algebraic closed field $k$ of
characteristic $p>0$ and $A$ is an abelian variety. Let $K_X$ be a
canonical divisor. Since $K_X$ is big, we have $K_X\sim_\mathbb{Q}
H+E$ where $H$ is an ample $\mathbb{Q}$-divisor and $E$ is an
effective $\mathbb{Q}$-divisor. Let $\Delta=(1-\epsilon)K_X+\epsilon
E$, where $\epsilon\in\mathbb{Q}$ and $0<\epsilon<1$. Fix a positive
integer $l$ such that $l\Delta$ is Cartier. Although $\Delta$ is not
necessarily effective, since $K_X$ is big and $E$ is effective, we
have the Iitaka dimension $\kappa(X,l\Delta)\geqslant 0$. For any
positive integer $r$, let
$\mathcal{F}_r=\mathcal{O}_X((r+1)K_X)\otimes\tau(\|r\Delta\|)$.



Let $\mathfrak{a_m}$ be the base ideal of the linear series
$|ml\Delta|$. By the definition of the asymptotic test ideal, we can
fix a positive integer $m'$ sufficiently large and divisible such
that
$\tau(\|r\Delta\|)=\tau(\mathfrak{a}_\bullet^{r/l})=\tau(\mathfrak{a}_{m'}^{r/m'l})$.
We may assume $m'=rm$ for some positive integer $m$. Then
$\tau(\|r\Delta\|)=\tau(\mathfrak{a}_{rm}^{1/ml})$. For every $e\gg
0$, we have
$\tau(\mathfrak{a}_{rm}^{1/ml})=\left(\mathfrak{a}_{rm}^{\lceil
p^e/ml\rceil}\right)^{[1/p^e]}$. Hence, the iterated trace map
$\text{Tr}^e$ gives a surjection
$$\text{Tr}^e:F_*^e(\mathfrak{a}_{rm}^{\lceil
p^e/ml\rceil}\cdot \mathcal{O}_X(K_X))
\rightarrow\tau(\|r\Delta\|)\cdot \mathcal{O}_X(K_X).$$ Tensoring
with $\mathcal{O}_X(rK_X)$, we have a surjection
$$F_*^e(\mathfrak{a}_{rm}^{\lceil p^e/ml\rceil}\cdot
\mathcal{O}_X((rp^e+1)K_X)) \rightarrow\mathcal{F}_r.$$ Let
$\widetilde{\mathcal{F}_{r,e}}=\mathfrak{a}_{rm}^{\lceil
p^e/ml\rceil}\cdot \mathcal{O}_X((rp^e+1)K_X)$. Then the surjection
above is $F^e_*\widetilde{\mathcal{F}_{r,e}}\rightarrow
\mathcal{F}_r$. Since $\mathfrak{a}_{rm}$ is the base ideal of
$|rml\Delta|$, the evaluation gives a surjection
$$H^0(X,\mathcal{O}_X(rml\Delta))\otimes\mathcal{O}_X(-rml\Delta)
\rightarrow\mathfrak{a}_{rm},$$ hence a surjection
$$V_{r,e}\otimes\mathcal{O}_X(-rml\lceil
p^e/ml\rceil\Delta) \rightarrow\mathfrak{a}_{rm}^{\lceil
p^e/ml\rceil},$$ where $V_{r,e}=\text{Sym}^{\lceil
p^e/ml\rceil}H^0(X,\mathcal{O}_X(rml\Delta))$. Tensoring with
$\mathcal{O}_X((rp^e+1)K_X)$, we have a surjection
$$\mathcal{F}_{r,e}=V_{r,e}\otimes\mathcal{O}_X(-rml\lceil
p^e/ml\rceil\Delta+(rp^e+1)K_X) \rightarrow
\widetilde{\mathcal{F}_{r,e}},$$ hence a surjection
$F^e_*\mathcal{F}_{r,e}\rightarrow\mathcal{F}_r$ since $F^e$ is
affine.

\begin{lemma}\label{d-image} Fix $r>0$. Then
$R^if_*(F^e_*\mathcal{F}_{r,e})=0$ for all $i>0$ and all $e$ large
enough.
\end{lemma}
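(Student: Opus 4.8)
The plan is to reduce the vanishing of $R^if_*(F^e_*\mathcal{F}_{r,e})$ to a Serre-type vanishing statement. Since $F^e$ is a finite (hence affine) morphism, $R^if_*(F^e_*\mathcal{F}_{r,e}) = R^i(f\circ F^e)_*\mathcal{F}_{r,e}$. Now $F^e\colon X\to X$ is a finite morphism between the same underlying variety, so $f\circ F^e = f\circ F^e$ agrees with $F^e_A\circ f$ up to the identification of source and target — more precisely, since the absolute Frobenius is functorial, $f\circ F^e_X = F^e_A\circ f$. As $F^e_A$ is finite on $A$, we get $R^i(F^e_A\circ f)_*\mathcal{F}_{r,e} = (F^e_A)_*R^if_*\mathcal{F}_{r,e}$, and since $(F^e_A)_*$ is exact and faithful, it suffices to show $R^if_*\mathcal{F}_{r,e} = 0$ for $i>0$ and $e\gg 0$.

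**Next I would** unwind the definition of $\mathcal{F}_{r,e}$. Recall
$$\mathcal{F}_{r,e}=V_{r,e}\otimes\mathcal{O}_X\!\left(-rml\lceil p^e/ml\rceil\Delta+(rp^e+1)K_X\right),$$
where $V_{r,e}$ is just a (characteristic-independent) vector space, so it plays no role in the cohomological computation. Write $N_e = -rml\lceil p^e/ml\rceil\Delta+(rp^e+1)K_X$. The key point is that $-rml\lceil p^e/ml\rceil\Delta$ differs from $-p^e\Delta$ by a bounded amount: setting $\delta_e = rml\lceil p^e/ml\rceil - p^e \in [0, rml)$, we have $N_e = (rp^e+1)K_X - p^e\Delta - \delta_e\Delta$. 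Using $\Delta=(1-\epsilon)K_X+\epsilon E$ and $K_X\sim_\mathbb{Q}H+E$, one computes $p^e(K_X-\Delta) = p^e\epsilon(K_X - E)\sim_\mathbb{Q} p^e\epsilon H$, which is ample and grows with $e$. Thus $N_e \sim_\mathbb{Q} K_X + p^e\epsilon H + (\text{bounded }\mathbb{Q}\text{-divisor depending on }r\text{ but not }e)$. Since there are only finitely many residues $\delta_e \bmod rml$, the bounded part ranges over a finite set of $\mathbb{Q}$-divisors as $e$ varies; fixing $r$, write $N_e \sim_\mathbb{Q} p^e\epsilon H + B$ where $B$ lies in a finite set.

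**Then** since $p^e\epsilon H$ is ample and its positivity grows without bound while $B$ stays in a finite set, by Serre vanishing (applied to the finitely many possibilities for $B$, or more cleanly via the relative Fujita-type bound for $R^if_*$ of twists by growing multiples of an ample divisor) we get $R^if_*\mathcal{O}_X(N_e)=0$ for all $i>0$ once $e$ is large enough. One technical wrinkle: $N_e$ is a $\mathbb{Q}$-divisor, not necessarily Cartier, so I should first clear denominators — fix $l$ so that $l\Delta$ is Cartier (as in the setup) and note $N_e$ is a $\mathbb{Z}$-divisor precisely when the fractional contributions cancel; in general one replaces $\mathcal{O}_X(N_e)$ by its round-down, which only helps vanishing, or works on a suitable cover. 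Tensoring by the constant vector space $V_{r,e}$ preserves the vanishing, giving $R^if_*\mathcal{F}_{r,e}=0$, and hence the claim.

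**The main obstacle** I anticipate is bookkeeping the $\mathbb{Q}$-divisor $N_e$ carefully enough to isolate a genuinely ample part growing with $e$ from a part that is bounded uniformly in $e$ (for fixed $r$), and making sure the "Serre vanishing" is applied in a form valid for a family of line bundles rather than a single one — the cleanest route is to observe that $\mathcal{O}_X(p^e\epsilon l H)$-twists need only the boundedness of the remaining divisor classes, of which there are finitely many, so ordinary Serre vanishing on each suffices. Everything else is formal: the Frobenius-commutes-with-$f$ identity and the affineness of finite morphisms.
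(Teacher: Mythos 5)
Your proposal is correct and follows essentially the same route as the paper: reduce to $R^if_*\mathcal{F}_{r,e}=0$ using $f\circ F^e=F^e\circ f$ and the exactness of $F^e_*$, then split the divisor into a growing multiple of the ample class $K_X-\Delta\sim_{\mathbb{Q}}\epsilon H$ plus a term ranging over a finite set (indexed by $t=ml\lceil p^e/ml\rceil-p^e\in[0,ml)$), and apply Serre vanishing to each member of that finite set. Two small corrections: your $\delta_e$ should be $rml\lceil p^e/ml\rceil-rp^e=rt\in[0,rml)$ (as written, with $-p^e$, it is unbounded for $r\geqslant 2$, though the surplus is just a further nonnegative multiple of the ample part and does no harm), and $N_e$ is in fact Cartier because $l\Delta$ and $K_X$ are, so no rounding or covering argument is needed.
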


\begin{proof}
First, we prove that $R^if_*\mathcal{F}_{r,e}=0$ for all $i>0$ and
all $e$ large enough. Since $V_{r,e}$ is a vector space over $k$, we
only need to show that
$$R^if_*\mathcal{O}_X(-rml\lceil p^e/ml\rceil\Delta+(rp^e+1)K_X)=0.$$
But
$$\begin{array}{cl}
        & -rml\lceil p^e/ml\rceil\Delta+(rp^e+1)K_X \\
=\!\!\! & -rmls\Delta+(rmls-rt+1)K_X \\
=\!\!\! & (1-rt)K_X+rmls(K_X-\Delta),
\end{array}$$
where $s=\lceil p^e/ml\rceil$ and $0\leqslant t=mls-p^e<ml$.
Noticing that $K_X-\Delta\sim_\mathbb{Q}\epsilon H$ which is ample,
we may apply Serre vanishing. For each value of $t\in [0,ml-1]$, we
have $R^if_*\mathcal{O}_X((1-rt)K_X+rmls(K_X-\Delta))=0$ for all $s$
large enough, i.e., all $e$ large enough. Thus
$R^if_*\mathcal{F}_{r,e}=0$.

Now, since $F^e$ is exact and commutes with $f$, we have
$$R^if_*(F^e_*\mathcal{F}_{r,e}) = R^i(f\circ
F^e)_*\mathcal{F}_{r,e} = R^i(F^e \circ f)_*\mathcal{F}_{r,e} =
F^e_*(R^if_*\mathcal{F}_{r,e}) = 0$$ for all $i>0$ and all $e$ large
enough.
\end{proof}

\begin{lemma}\label{IT0} Fix $r>0$. There is an integer $M>0$ such that
$H^i(A, f_*(F^e_*\mathcal{F}_{r,e})\otimes P)=0$ for all $i>0$,
$e>M$ and $P\in Pic^0(A)$.
\end{lemma}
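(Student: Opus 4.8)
The plan is to bootstrap from Lemma \ref{d-image} together with Serre vanishing on the abelian variety $A$, using the Frobenius to absorb the twist by $P \in \mathrm{Pic}^0(A)$. First I would record the key point that for any coherent sheaf $\mathcal{G}$ on $A$ and any $P \in \mathrm{Pic}^0(A)$, the projection formula gives $F^e_*(\mathcal{G}) \otimes P \cong F^e_*(\mathcal{G} \otimes F^{e*}P)$, and since $F^{e*}P \cong P^{\otimes p^e}$ lies again in $\mathrm{Pic}^0(A)$, the $p^e$-power operation is surjective on the (divisible) group $\mathrm{Pic}^0(A)$; so twisting by an arbitrary $P$ is, after pushing forward by $F^e$, the same as pushing forward a twist by an arbitrary element of $\mathrm{Pic}^0(A)$ sitting under $\mathcal{F}_{r,e}$'s own structure. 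This is the mechanism that lets a single Serre-vanishing bound work uniformly in $P$.

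Next I would combine the two Leray spectral sequences. By Lemma \ref{d-image}, for $e$ large we have $R^i f_*(F^e_* \mathcal{F}_{r,e}) = 0$ for $i > 0$, so the Leray spectral sequence for $f \colon X \to A$ degenerates and gives $H^i(A, f_*(F^e_*\mathcal{F}_{r,e}) \otimes P) \cong H^i(X, F^e_*\mathcal{F}_{r,e} \otimes f^*P)$ for all $i$. Now $F^e_*\mathcal{F}_{r,e} \otimes f^*P \cong F^e_*(\mathcal{F}_{r,e} \otimes F^{e*}f^*P) = F^e_*(\mathcal{F}_{r,e} \otimes f^*(P^{\otimes p^e}))$, and pushing forward by the finite (hence affine) morphism $F^e$ does not change cohomology, so this group equals $H^i(X, \mathcal{F}_{r,e} \otimes f^*(P^{\otimes p^e}))$. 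Unwinding the definition $\mathcal{F}_{r,e} = V_{r,e} \otimes \mathcal{O}_X(-rml\lceil p^e/ml\rceil \Delta + (rp^e+1)K_X)$ and using the computation from Lemma \ref{d-image}, the line bundle part is $(1-rt)K_X + rmls(K_X - \Delta)$ with $0 \le t < ml$; since $K_X - \Delta \sim_{\mathbb{Q}} \epsilon H$ is ample, for each of the finitely many values of $t$ the divisor $rmls(K_X-\Delta)$ becomes arbitrarily positive as $s \to \infty$. So it remains to apply a uniform Serre vanishing statement: for a fixed coherent sheaf (here $\mathcal{O}_X((1-rt)K_X) \otimes f^*(\text{line bundle of degree }0)$, which ranges in a bounded family as $P$ varies) twisted by high powers of a fixed ample divisor, higher cohomology vanishes with a bound independent of the degree-zero twist.

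The main obstacle is precisely this last uniformity: ordinary Serre vanishing fixes the sheaf first and then takes the twist large, whereas here the sheaf $\mathcal{O}_X((1-rt)K_X) \otimes f^*(P^{\otimes p^e})$ itself depends on $e$ through $P^{\otimes p^e}$. The way around it is to note that $\{f^*Q : Q \in \mathrm{Pic}^0(A)\}$ is a bounded family — it is parametrized by the abelian variety $\mathrm{Pic}^0(A)$ — so by a standard boundedness form of Serre vanishing (e.g. via a Castelnuovo–Mumford regularity argument applied over the base $\mathrm{Pic}^0(A)$, or by spreading out and using that cohomology is upper-semicontinuous on the parameter space), there is a single $m_0$ such that $H^i(X, \mathcal{O}_X((1-rt)K_X) \otimes f^*Q \otimes \mathcal{O}_X(rmls(K_X-\Delta))) = 0$ for all $i>0$, all $Q \in \mathrm{Pic}^0(A)$, all $t \in [0, ml-1]$, and all $rmls$ sufficiently divisible and large. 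Taking $M$ so that $s = \lceil p^e/ml \rceil$ exceeds the resulting threshold for every $e > M$, and assembling the isomorphisms above, yields $H^i(A, f_*(F^e_*\mathcal{F}_{r,e}) \otimes P) = 0$ for all $i>0$, $e > M$, and $P \in \mathrm{Pic}^0(A)$, as desired.
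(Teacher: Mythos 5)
Your proposal is correct and follows the same skeleton as the paper's proof: reduce via Lemma \ref{d-image} and the projection formula to showing $H^i(X,\mathcal{F}_{r,e}\otimes f^*P^{\otimes p^e})=0$, then write the relevant line bundle as $(1-rt)K_X+rmls(K_X-\Delta)$ with $t$ ranging over the finite set $[0,ml-1]$ and exploit the ampleness of $K_X-\Delta\sim_{\mathbb{Q}}\epsilon H$. The one place where you diverge is exactly the step you correctly single out as the main obstacle: the twisting sheaf $f^*P^{\otimes p^e}$ varies with $e$, so plain Serre vanishing does not apply. The paper resolves this with Fujita vanishing, which gives a threshold uniform over \emph{all} nef line bundles $\mathcal{N}$ at once, and then simply observes that $f^*P^{\otimes p^e}$ is nef. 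You instead invoke a relative (bounded-family) form of Serre vanishing over the parameter space $\mathrm{Pic}^0(A)$, using the Poincar\'e bundle and cohomology-and-base-change to get a threshold uniform over the family $\{f^*Q: Q\in\mathrm{Pic}^0(A)\}$; since $P^{\otimes p^e}$ again lies in $\mathrm{Pic}^0(A)$, this suffices. Both routes are valid. Fujita vanishing is the sharper, off-the-shelf tool (and is what the subsequent literature in characteristic $p$ uses systematically); your bounded-family argument is more elementary in that it avoids quoting Fujita's theorem, at the cost of setting up the relative vanishing and base-change machinery. Two cosmetic remarks: the surjectivity of $P\mapsto P^{\otimes p^e}$ on $\mathrm{Pic}^0(A)$ is true but irrelevant here --- what you actually need is only that $P^{\otimes p^e}$ \emph{remains} in $\mathrm{Pic}^0(A)$; and you should note that $rml(K_X-\Delta)$ is an honest ample Cartier divisor (since $l\Delta$ is Cartier), so that taking its $s$-th multiple is genuinely a Serre-type twist.
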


\begin{proof}
The proof is similar to that of Lemma \ref{d-image}. By Lemma
\ref{d-image} and the projection formula,
$R^if_*(F^e_*\mathcal{F}_{r,e}\otimes f^*P)=0$ for all $i>0$ and $e$
large enough. Hence, by a spectral sequence argument, it suffices to
prove that $H^i(X,F^e_*\mathcal{F}_{r,e}\otimes f^*P)=0$ or
equivalently that $H^i(X, \mathcal{F}_{r,e}\otimes f^*P^{\otimes
p^e})=0$. We only need to show that
$$H^i(X,\mathcal{O}_X(-rml\lceil
p^e/ml\rceil\Delta+(rp^e+1)K_X)\otimes f^*P^{\otimes p^e})=0.$$
Assume that $s=\lceil p^e/ml\rceil$ and $0\leqslant t=mls-p^e<ml$.
Since $K_X-\Delta\sim_\mathbb{Q}\epsilon H$ is ample, by Fujita
vanishing, for each value of $t$, there is an $M_t>0$ such that for
all $e>M_t$ and all nef line bundles $\mathcal{N}$ on $X$, we have
$$H^i(X,\mathcal{O}_X((1-rt)K_X+rmls(K_X-\Delta))\otimes\mathcal{N})=0.$$
Let $M=\max\{M_t\}$, then
$$H^i(X,\mathcal{O}_X(-rml\lceil
p^e/ml\rceil\Delta+(rp^e+1)K_X)\otimes\mathcal{N})=0$$ for all $e>M$
and all nef line bundles $\mathcal{N}$. In particular, we can take
$\mathcal{N}=f^*P^{\otimes p^e}$. The lemma follows.
\end{proof}

\section{Main Results}

Fix a positive integer $m$ such that
$\tau(\|r\Delta\|)=\tau(\mathfrak{a}_{rm}^{1/ml})$. Let
$\mathcal{I}$ be an ideal sheaf in $\mathcal{O}_X$. In our
applications, $\mathcal{I}=\mathcal{O}_X$ or
$\mathcal{I}=\mathcal{I}_x$, where $\mathcal{I}_x$ is the maximal
ideal of closed point. The composition of the two surjections,
$F^e_*\mathcal{F}_{r,e}\rightarrow \mathcal{F}_r$ and
$\mathcal{F}_r\rightarrow \mathcal{F}_r\otimes
\mathcal{O}_X/\mathcal{I}$, is still surjective. We define
$(F^e_*\mathcal{F}_{r,e})_{\mathcal{I}}$ to be the kernel of this
composition. Then
$(F^e_*\mathcal{F}_{r,e})_{\mathcal{O}_X}=F^e_*\mathcal{F}_{r,e}$.
Assuming that $$\text{the intersection of the co-supports of }
\tau(\|r\Delta\|) \text{ and } \mathcal{I} \text{ is
empty},\eqno{(*)_r}$$ since the composition
$(F^e_*\mathcal{F}_{r,e})_{\mathcal{I}}\rightarrow
F^e_*\mathcal{F}_{r,e}\rightarrow \mathcal{F}_r\rightarrow
\mathcal{F}_r\otimes \mathcal{O}_X/\mathcal{I}$ is 0, it factors
through the kernel of $\mathcal{F}_r\rightarrow \mathcal{F}_r\otimes
\mathcal{O}_X/\mathcal{I}$, which is $\mathcal{F}_r\otimes
\mathcal{I}$.
 We have a map
$(F^e_*\mathcal{F}_{r,e})_{\mathcal{I}}\rightarrow
\mathcal{F}_r\otimes \mathcal{I}$, and by the 5-lemma, it is
surjective. This is summarized in the following commutative diagram.
$$\xymatrix{
0 \ar[r] & (F^e_*\mathcal{F}_{r,e})_{\mathcal{I}} \ar[r]\ar@{->>}[d]
& F^e_*\mathcal{F}_{r,e} \ar[r]\ar@{->>}[d] & \mathcal{F}_r\otimes
\mathcal{O}_X/\mathcal{I}
\ar[r]\ar@{=}[d] & 0 \\
0 \ar[r] & \mathcal{F}_r\otimes \mathcal{I} \ar[r] & \mathcal{F}_r
\ar[r] & \mathcal{F}_r\otimes \mathcal{O}_X/\mathcal{I} \ar[r] & 0
}$$

\begin{remark}\label{rmk}
The condition $(*)_r$ is true if $\mathcal{I}=\mathcal{O}_X$ or
$\mathcal{I}=\mathcal{I}_x$ where $x$ is not in the co-support of
$\tau(\|r\Delta\|)$. And $(*)_r$ implies $(*)_s$ if $r\geqslant s$,
since $\tau(\|r\Delta\|)\subseteq \tau(\|s\Delta\|)$.
\end{remark}

Suppose that $x$ is a point in $X$ such that $x$ is not in the
co-support of the ideals $\tau(\|r\Delta\|)$ or $\mathcal{I}$. Then
the restriction to the point $x$ gives a surjection
$\mathcal{F}_r\otimes \mathcal{I}\rightarrow \mathcal{F}_r\otimes
k(x)\cong k(x)$. Hence, a surjection
$(F^e_*\mathcal{F}_{r,e})_{\mathcal{I}}\rightarrow
\mathcal{F}_r\otimes k(x)$. Let
$(F^e_*\mathcal{F}_{r,e})_{\mathcal{I},x}$ be the kernel. Thus, we
have the following exact sequence
$$0\rightarrow (F^e_*\mathcal{F}_{r,e})_{\mathcal{I},x}\rightarrow
(F^e_*\mathcal{F}_{r,e})_{\mathcal{I}}\rightarrow
\mathcal{F}_r\otimes k(x)\rightarrow 0.$$

Let $f:X\rightarrow A$ be a nontrivial separable morphism where $A$
is an abelian variety.

\begin{theorem}\label{bpf} Fix $e>0$ and $r$ a positive integer. Let $\mathcal{I}$ be an ideal sheaf in $\mathcal{O}_X$ satisfying $(*)_r$.
Suppose that $x$ is a point in $X$ such that
    \begin{enumerate}
    \item $x$ is not in the co-support of $\tau(\|r\Delta\|)$ or $\mathcal{I}$,
    \item $f_*(F^e_*\mathcal{F}_{r,e})_{\mathcal{I},x}\neq
    f_*(F^e_*\mathcal{F}_{r,e})_{\mathcal{I}}$,
    \item $H^i(A,f_*(F^e_*\mathcal{F}_{r,e})_{\mathcal{I}}\otimes P)=0$ for
    all $i>0$ and all $P\in \text{Pic}^0(A)$.
    \end{enumerate}
Then the homomorphism
$H^0(X,(F^e_*\mathcal{F}_{r,e})_{\mathcal{I}}\otimes
f^*P)\rightarrow H^0(X,\mathcal{F}_r\otimes \mathcal{I}\otimes
f^*P\otimes k(x))\cong k(x)$ induced by $\phi_{r,e,\mathcal{I}}$ is
surjective for general $P\in\text{Pic}^0(A)$. Moreover, $x$ is not a
base point of $\mathcal{F}_r\otimes\mathcal{I}\otimes f^*P$ for
general $P\in \text{Pic}^0(A)$.
\end{theorem}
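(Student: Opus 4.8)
The plan is to apply Proposition \ref{hacon} to the coherent sheaf $\mathcal{G}:=f_*(F^e_*\mathcal{F}_{r,e})_{\mathcal{I}}$ on $A$ (coherent because $f$ is proper). Set $a=f(x)$. The first task is to produce the surjection $\mathcal{G}\rightarrow k(a)$ that the proposition requires as input. Applying the left-exact functor $f_*$ to
$$0\rightarrow (F^e_*\mathcal{F}_{r,e})_{\mathcal{I},x}\rightarrow (F^e_*\mathcal{F}_{r,e})_{\mathcal{I}}\rightarrow \mathcal{F}_r\otimes k(x)\rightarrow 0$$
and using $f_*(\mathcal{F}_r\otimes k(x))=f_*k(x)=k(a)$ (unlike the Frobenius push-forward discussed in the Remark after Proposition \ref{hacon}, an ordinary push-forward of a trivial skyscraper is again a trivial skyscraper), one sees that the image of the induced map $\mathcal{G}\rightarrow k(a)$ is the cokernel of the injection $f_*(F^e_*\mathcal{F}_{r,e})_{\mathcal{I},x}\hookrightarrow\mathcal{G}$, which is nonzero by hypothesis (2). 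Since the only nonzero subsheaf of $k(a)$ is $k(a)$ itself, this image is all of $k(a)$; hence $\mathcal{G}\rightarrow k(a)$ is surjective and in particular $\mathcal{G}\neq 0$.

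Next, identifying $\hat A\cong\mathrm{Pic}^0(A)$ so that $P_{\hat a}$ corresponds to $P\in\mathrm{Pic}^0(A)$, hypothesis (3) is exactly the vanishing $H^i(A,\mathcal{G}\otimes P_{\hat a})=0$ for all $i>0$ and all $\hat a\in\hat A$ required by Proposition \ref{hacon}. The proposition then gives that $H^0(A,\mathcal{G}\otimes P_{\hat a})\rightarrow k(a)$ is surjective for general $\hat a$. To descend this to $X$, I would use the projection formula, $\mathcal{G}\otimes P=f_*\bigl((F^e_*\mathcal{F}_{r,e})_{\mathcal{I}}\otimes f^*P\bigr)$, so that $H^0(A,\mathcal{G}\otimes P)=H^0(X,(F^e_*\mathcal{F}_{r,e})_{\mathcal{I}}\otimes f^*P)$; tensoring the displayed sequence by the invertible sheaf $f^*P$ and pushing forward then identifies the surjection $\mathcal{G}\otimes P\rightarrow k(a)$ with the map induced by the composite $\phi_{r,e,\mathcal{I}}\colon (F^e_*\mathcal{F}_{r,e})_{\mathcal{I}}\rightarrow \mathcal{F}_r\otimes\mathcal{I}$ followed by restriction to $x$, whose target is $H^0(X,\mathcal{F}_r\otimes\mathcal{I}\otimes f^*P\otimes k(x))\cong k(x)$ (here $\mathcal{F}_r$ and $\mathcal{I}$ are invertible near $x$ by hypothesis (1)). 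This is the first assertion.

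For the last assertion, note that the surjection just obtained factors as
$$H^0\bigl(X,(F^e_*\mathcal{F}_{r,e})_{\mathcal{I}}\otimes f^*P\bigr)\longrightarrow H^0\bigl(X,\mathcal{F}_r\otimes\mathcal{I}\otimes f^*P\bigr)\longrightarrow \bigl(\mathcal{F}_r\otimes\mathcal{I}\otimes f^*P\bigr)\otimes k(x)\cong k(x),$$
where the first arrow is induced by $\phi_{r,e,\mathcal{I}}$ and the second is evaluation at $x$. Hence the evaluation map is surjective, so $\mathcal{F}_r\otimes\mathcal{I}\otimes f^*P$ has a global section not vanishing at $x$; that is, $x$ is not a base point.

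I expect the main obstacle to be not any single computation but the bookkeeping in the middle paragraph: one must check that the surjection abstractly furnished by Proposition \ref{hacon} agrees, after the projection formula and the Leray isomorphism $H^0(A,f_*-)=H^0(X,-)$, with the concrete evaluation map built from $\phi_{r,e,\mathcal{I}}$. This means keeping careful track of the identifications of skyscraper sheaves and their push-forwards (cf.\ the Remark), and of the fact that $\mathcal{F}_r$ and $\mathcal{I}$ are invertible only in a neighbourhood of $x$ rather than on all of $X$.
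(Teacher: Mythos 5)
Your proposal is correct and follows essentially the same route as the paper: push forward the defining exact sequence, use hypothesis (2) and the fact that $k(a)$ has no proper nonzero subsheaves to get the surjection $f_*(F^e_*\mathcal{F}_{r,e})_{\mathcal{I}}\rightarrow k(a)$, apply Proposition \ref{hacon}, and then factor through $H^0(X,\mathcal{F}_r\otimes\mathcal{I}\otimes f^*P)$ for the base-point statement. The only cosmetic differences are that you identify the image of $\mathcal{G}\rightarrow k(a)$ as the cokernel of the injection rather than as the kernel of the connecting map to $R^1f_*$, and that you spell out the projection-formula/Leray bookkeeping that the paper leaves implicit.
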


\begin{proof}
Pushing forward the exact sequence $$0\rightarrow
(F^e_*\mathcal{F}_{r,e})_{\mathcal{I},x} \rightarrow
(F^e_*\mathcal{F}_{r,e})_{\mathcal{I}} \rightarrow
\mathcal{F}_r\otimes k(x) \rightarrow 0,$$ we have
$$0 \rightarrow f_*(F^e_*\mathcal{F}_{r,e})_{\mathcal{I},x} \rightarrow
f_*(F^e_*\mathcal{F}_{r,e})_{\mathcal{I}} \rightarrow
f_*(\mathcal{F}_r\otimes k(x)) \rightarrow
R^1f_*(F^e_*\mathcal{F}_{r,e})_{\mathcal{I},x}\rightarrow \cdots.$$
Let $a=f(x)$. Since $f$ is separable, we have that $a$ is reduced,
hence $f_*(\mathcal{F}_r\otimes k(x))\cong k(a)$. By assumption,
$f_*(F^e_*\mathcal{F}_{r,e})_{\mathcal{I},x} \rightarrow
f_*(F^e_*\mathcal{F}_{r,e})_{\mathcal{I}}$ is not an isomorphism,
which implies that the kernel of $k(a) \rightarrow
R^1f_*(F^e_*\mathcal{F}_{r,e})_{\mathcal{I},x}$ is not 0. But the
kernel is a sub-sheaf of $k(a)$ who has no non-zero sub-sheaf other
than itself. Hence the kernel is $k(a)$ and we have an exact
sequence
$$0 \rightarrow f_*(F^e_*\mathcal{F}_{r,e})_{\mathcal{I},x} \rightarrow
f_*(F^e_*\mathcal{F}_{r,e})_{\mathcal{I}} \rightarrow k(a)
\rightarrow 0.$$ Applying Proposition \ref{hacon} to the surjection
$f_*(F^e_*\mathcal{F}_{r,e})_{\mathcal{I}} \rightarrow k(a)$, we
have the surjection
$H^0(f_*(F^e_*\mathcal{F}_{r,e})_{\mathcal{I}}\otimes P)\rightarrow
k(a)$ for general $P\in \text{Pic}^0(A)$. Hence, the theorem
follows. For the moreover part, noticing that the surjection factors
through $H^0(\mathcal{F}_r\otimes\mathcal{I}\otimes f^*P)$, we have
the induced homomorphism $H^0(\mathcal{F}_r\otimes\mathcal{I}\otimes
f^*P)\rightarrow k(x)$ is also surjective.
\end{proof}

The following corollary is useful in the case of maximal Albanese dimension.

\begin{corollary}\label{gfinite}
Suppose $f$ is finite over an open subset $U$ in $A$. Fix $e>0$ and
$r$ a positive integer. Let $\mathcal{I}$ be an ideal sheaf in
$\mathcal{O}_X$ satisfying $(*)_r$. Suppose that $x$ is a point in
$X$ such that
    \begin{enumerate}
    \item $x$ is not in the co-support of $\tau(\|r\Delta\|)$ or $\mathcal{I}$,
    \item $a=f(x)\in U$,
    \item $H^i(A,f_*(F^e_*\mathcal{F}_{r,e})_{\mathcal{I}}\otimes P)=0$ for
    all $i>0$ and all $P\in \text{Pic}^0(A)$.
    \end{enumerate}
Then the conclusion of Theorem \ref{bpf} still holds.
\end{corollary}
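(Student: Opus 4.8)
\emph{Proof proposal.} The plan is to reduce everything to Theorem \ref{bpf}. Conditions (1) and (3) of the corollary are literally conditions (1) and (3) of Theorem \ref{bpf}, so the only thing to check is that hypothesis (2) of Theorem \ref{bpf}, namely $f_*(F^e_*\mathcal{F}_{r,e})_{\mathcal{I},x}\neq f_*(F^e_*\mathcal{F}_{r,e})_{\mathcal{I}}$, is automatically satisfied once $f$ is finite over an open set $U$ containing $a=f(x)$. Granting this, Theorem \ref{bpf} applies verbatim and yields the conclusion.

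To verify hypothesis (2), start from the short exact sequence
$$0\rightarrow (F^e_*\mathcal{F}_{r,e})_{\mathcal{I},x}\rightarrow (F^e_*\mathcal{F}_{r,e})_{\mathcal{I}}\rightarrow \mathcal{F}_r\otimes k(x)\rightarrow 0$$
constructed just before Theorem \ref{bpf}; here $\mathcal{F}_r\otimes k(x)\cong k(x)$ since $x$ is not in the co-support of $\tau(\|r\Delta\|)$, so $\mathcal{F}_r$ is locally free of rank one near $x$. Because $f$ restricted to $f^{-1}(U)\to U$ is finite, it is affine, so $R^if_*\mathcal{G}$ restricted to $U$ vanishes for all $i>0$ and all coherent $\mathcal{G}$; equivalently, $f_*$ is exact after restriction of its output to $U$. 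Pushing the displayed sequence forward and restricting to $U$ therefore gives a short exact sequence
$$0\rightarrow f_*(F^e_*\mathcal{F}_{r,e})_{\mathcal{I},x}|_U\rightarrow f_*(F^e_*\mathcal{F}_{r,e})_{\mathcal{I}}|_U\rightarrow f_*(\mathcal{F}_r\otimes k(x))|_U\rightarrow 0.$$
Exactly as in the proof of Theorem \ref{bpf}, separability of $f$ gives $f_*(\mathcal{F}_r\otimes k(x))\cong k(a)$, a nonzero skyscraper supported at $a\in U$. Hence the cokernel of the inclusion $f_*(F^e_*\mathcal{F}_{r,e})_{\mathcal{I},x}\hookrightarrow f_*(F^e_*\mathcal{F}_{r,e})_{\mathcal{I}}$ contains $k(a)\neq 0$, so the two sheaves differ, which is precisely hypothesis (2) of Theorem \ref{bpf}.

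The one point that genuinely uses the hypothesis is the exactness of $f_*$ over $U$: in general $f_*$ is only left exact, and a priori the connecting map $f_*(\mathcal{F}_r\otimes k(x))\rightarrow R^1f_*(F^e_*\mathcal{F}_{r,e})_{\mathcal{I},x}$ could be an isomorphism, in which case hypothesis (2) of Theorem \ref{bpf} would fail; finiteness of $f$ over the open set $U$ through which $a$ is assumed to pass kills this obstruction locally around $a$. Everything else is a direct transcription of Theorem \ref{bpf}.
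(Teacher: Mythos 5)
Your proposal is correct and follows essentially the same route as the paper: both reduce to verifying hypothesis (2) of Theorem \ref{bpf} and both use the vanishing of $R^1f_*$ over the open set $U$ where $f$ is finite (the paper phrases this as a contradiction via the nonzero stalk of $R^1f_*(F^e_*\mathcal{F}_{r,e})_{\mathcal{I},x}$ at $a$, while you phrase it directly as exactness of $f_*$ over $U$; these are the same argument).
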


\begin{proof}
By Theorem \ref{bpf}, we only need to show that
$f_*(F^e_*\mathcal{F}_{r,e})_{\mathcal{I},x}\neq
f_*(F^e_*\mathcal{F}_{r,e})_{\mathcal{I}}$. Recall that we have an
exact sequence
$$0 \rightarrow f_*(F^e_*\mathcal{F}_{r,e})_{\mathcal{I},x} \rightarrow
f_*(F^e_*\mathcal{F}_{r,e})_{\mathcal{I}} \rightarrow
f_*(\mathcal{F}_r\otimes k(x)) \rightarrow
R^1f_*(F^e_*\mathcal{F}_{r,e})_{\mathcal{I},x}\rightarrow \cdots.$$
If
$f_*(F^e_*\mathcal{F}_{r,e})_{\mathcal{I},x}=f_*(F^e_*\mathcal{F}_{r,e})_{\mathcal{I}}$,
we have that the map $f_*(\mathcal{F}_r\otimes k(x)) \rightarrow
R^1f_*(F^e_*\mathcal{F}_{r,e})_{\mathcal{I},x}$ is nonzero. So the
stalk of $R^1f_*(F^e_*\mathcal{F}_{r,e})_{\mathcal{I},x}$ at $a$ is
nonzero. But as $f$ is finite over $a$, the higher direct images are
0 at $a$, a contradiction.
\end{proof}

\begin{remark}\label{var_gfinite}
It is easy to see that the proof of Theorem \ref{bpf} and Corollary
\ref{gfinite} not only works for the surjection
$(F^e_*\mathcal{F}_{r,e})_\mathcal{I}\rightarrow
\mathcal{F}_r\otimes k(x)$, but any surjection to the trivial
skyscraper sheaf satisfying the vanishing condition (3). We will use
this variant version repeatedly in the proof of Theorem \ref{main}.
\end{remark}

Theorem \ref{bpf} also gives information on the base locus of
$\mathcal{O}_X(2K_X)\otimes f^*P$ for general $P\in\text{Pic}^0(A)$.

\begin{corollary}\label{cor}
Fix $e>M$ as in Lemma \ref{IT0}. Suppose that $x$ is a
point in $X$ such that
    \begin{enumerate}
    \item $x$ is not in the co-support of $\tau(\|\Delta\|)$,
    \item $f_*(F^e_*\mathcal{F}_{1,e})_{\mathcal{O}_X,x}\neq
    f_*(F^e_*\mathcal{F}_{1,e})$.
    \end{enumerate}
Then $x$ is not a base point of $\mathcal{F}_1\otimes f^*P$ for
general $P\in \text{Pic}^0(A)$. Hence $x$ is not a base point of
$\mathcal{O}_X(2K_X)\otimes f^*P$ for general $P\in
\text{Pic}^0(A)$.
\end{corollary}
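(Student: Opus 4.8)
The plan is to deduce Corollary \ref{cor} directly from Theorem \ref{bpf} by taking $r=1$ and $\mathcal{I}=\mathcal{O}_X$. First I would check the hypotheses of Theorem \ref{bpf} one by one. Hypothesis (1) of the theorem asks that $x$ not lie in the co-support of $\tau(\|r\Delta\|) = \tau(\|\Delta\|)$ or of $\mathcal{I} = \mathcal{O}_X$; the first is exactly hypothesis (1) of the corollary, and the co-support of $\mathcal{O}_X$ is empty, so this holds. The condition $(*)_1$ holds automatically since $\mathcal{I}=\mathcal{O}_X$ (see Remark \ref{rmk}). Hypothesis (2) of Theorem \ref{bpf}, namely $f_*(F^e_*\mathcal{F}_{1,e})_{\mathcal{O}_X,x}\neq f_*(F^e_*\mathcal{F}_{1,e})_{\mathcal{O}_X} = f_*(F^e_*\mathcal{F}_{1,e})$, is precisely hypothesis (2) of the corollary. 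Hypothesis (3) of Theorem \ref{bpf} requires $H^i(A, f_*(F^e_*\mathcal{F}_{1,e})_{\mathcal{O}_X}\otimes P)=0$ for all $i>0$ and $P\in\text{Pic}^0(A)$; since $(F^e_*\mathcal{F}_{1,e})_{\mathcal{O}_X} = F^e_*\mathcal{F}_{1,e}$, this is exactly the statement of Lemma \ref{IT0} with $r=1$, which holds for $e>M$ — and we have assumed $e>M$.

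With all three hypotheses verified, Theorem \ref{bpf} applies and gives that $x$ is not a base point of $\mathcal{F}_1\otimes\mathcal{I}\otimes f^*P = \mathcal{F}_1\otimes f^*P$ for general $P\in\text{Pic}^0(A)$. This proves the first assertion. For the second assertion, I would recall that $\mathcal{F}_1 = \mathcal{O}_X(2K_X)\otimes\tau(\|\Delta\|)$ by the definition of $\mathcal{F}_r$ with $r=1$. There is a natural inclusion $\mathcal{F}_1 = \mathcal{O}_X(2K_X)\otimes\tau(\|\Delta\|)\hookrightarrow \mathcal{O}_X(2K_X)$ coming from $\tau(\|\Delta\|)\subseteq\mathcal{O}_X$, hence an inclusion $\mathcal{F}_1\otimes f^*P\hookrightarrow \mathcal{O}_X(2K_X)\otimes f^*P$. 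Since $x$ is not in the co-support of $\tau(\|\Delta\|)$, this inclusion is an isomorphism in a neighborhood of $x$, so a section of $\mathcal{F}_1\otimes f^*P$ not vanishing at $x$ gives, under this inclusion, a section of $\mathcal{O}_X(2K_X)\otimes f^*P$ not vanishing at $x$. Therefore $x$ is not a base point of $\mathcal{O}_X(2K_X)\otimes f^*P$ for general $P$.

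There is essentially no obstacle here: the corollary is a direct specialization of Theorem \ref{bpf}, and the only mild point requiring care is the passage from $\mathcal{F}_1$ to $\mathcal{O}_X(2K_X)$, which uses that $x$ avoids the co-support of the test ideal so that the twisting-down map on sections is locally an isomorphism at $x$. I would write this out in a few lines.

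\begin{proof}
Apply Theorem \ref{bpf} with $r=1$ and $\mathcal{I}=\mathcal{O}_X$. The condition $(*)_1$ holds since $\mathcal{I}=\mathcal{O}_X$, and hypothesis (1) of Theorem \ref{bpf} holds because the co-support of $\mathcal{O}_X$ is empty and $x$ is not in the co-support of $\tau(\|\Delta\|)$ by assumption. Hypothesis (2) is the stated condition (2), since $(F^e_*\mathcal{F}_{1,e})_{\mathcal{O}_X}=F^e_*\mathcal{F}_{1,e}$. Hypothesis (3) is Lemma \ref{IT0} with $r=1$, which holds because $e>M$. Hence Theorem \ref{bpf} gives that $x$ is not a base point of $\mathcal{F}_1\otimes f^*P$ for general $P\in\text{Pic}^0(A)$.

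Now $\mathcal{F}_1=\mathcal{O}_X(2K_X)\otimes\tau(\|\Delta\|)$, so the inclusion $\tau(\|\Delta\|)\subseteq\mathcal{O}_X$ induces an inclusion $\mathcal{F}_1\otimes f^*P\hookrightarrow\mathcal{O}_X(2K_X)\otimes f^*P$ which is an isomorphism near $x$, as $x$ is not in the co-support of $\tau(\|\Delta\|)$. A section of $\mathcal{F}_1\otimes f^*P$ not vanishing at $x$ thus maps to a section of $\mathcal{O}_X(2K_X)\otimes f^*P$ not vanishing at $x$. Therefore $x$ is not a base point of $\mathcal{O}_X(2K_X)\otimes f^*P$ for general $P\in\text{Pic}^0(A)$.
\end{proof}
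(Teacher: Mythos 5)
Your proof is correct and follows exactly the paper's route: the first assertion is a direct application of Theorem \ref{bpf} with $r=1$, $\mathcal{I}=\mathcal{O}_X$, using Lemma \ref{IT0} to supply the vanishing hypothesis, and the second follows from $\mathcal{F}_1=\mathcal{O}_X(2K_X)\otimes\tau(\|\Delta\|)$ with $\tau(\|\Delta\|)$ an ideal. You merely spell out the hypothesis-checking and the local isomorphism at $x$ in more detail than the paper does.
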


\begin{proof}
The first part of the corollary follows directly from Theorem
\ref{bpf} and Lemma \ref{IT0}. The second part follows from the
facts that $\mathcal{F}_1=\mathcal{O}_X(2K_X)\otimes
\tau(\|\Delta\|)$ and $\tau(\|\Delta\|)$ is an ideal.
\end{proof}

We are ready to prove the main result.

\begin{theorem}\label{main}
Let $X$ be a smooth projective variety of general type over an
algebraic closed field $k$ of characteristic $p>0$. If $X$ has
maximal Albanese dimension and the Albanese map is separable, then
$|4K_X|$ induces a birational map.
\end{theorem}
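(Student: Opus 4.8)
The plan follows \cite{CH}: use Proposition \ref{hacon} repeatedly to manufacture sections of pluricanonical bundles, with the Frobenius-twisted sheaves $\mathcal F_{r,e}$ of Section 5 and Serre/Fujita vanishing playing the role that Kawamata--Viehweg vanishing plays in characteristic zero. Write $f=a\colon X\to A$ for the Albanese map; by hypothesis $f$ is separable, and since $X$ has maximal Albanese dimension it is generically finite onto its image, so the open set $U\subseteq A$ over which $f$ has fibres of dimension $\le 0$ is nonempty and $f^{-1}(U)\to U$ is finite. Let $B\subsetneq X$ be the co-support of $\tau(\|\Delta\|)$ and set $U_0:=f^{-1}(U)\setminus B$, a dense open subset of $X$. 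Keep $\Delta=(1-\epsilon)K_X+\epsilon E$, $\mathcal F_r$, $\mathcal F_{r,e}$ as in Sections 4--5, and fix $e$ large enough that the Frobenius surjection $F^e_*\mathcal F_{1,e}\twoheadrightarrow\mathcal F_1$ exists and that $e>M$ in Lemma \ref{IT0}. I will show that for any two distinct points $x_1,x_2\in U_0$ there is a section of $\mathcal O_X(4K_X)$ vanishing at $x_1$ but not at $x_2$. Granting this, $|4K_X|$ separates any two points of the dense open set $U_0$, so $\phi_{|4K_X|}$ is generically injective; a parallel argument with a length-two subscheme (a point plus a tangent direction) in place of $x_1$, using the variant of Theorem \ref{bpf} in Remark \ref{var_gfinite}, shows $\phi_{|4K_X|}$ is an immersion at a general point, hence generically \'etale onto its image; and a generically injective, generically \'etale morphism is birational onto its image, which is the theorem.

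The two building blocks both come from Corollary \ref{gfinite} with $r=1$. Taking $\mathcal I=\mathcal O_X$ and $x\in U_0$: hypotheses (1), (2) of Corollary \ref{gfinite} hold because $x\notin B$ and $f(x)\in U$, while hypothesis (3) is $H^i(A,f_*(F^e_*\mathcal F_{1,e})\otimes P)=0$ for all $i>0$ and all $P$, which is Lemma \ref{IT0} (as $e>M$); the conclusion is that $x$ is not a base point of $\mathcal F_1\otimes f^*P$, hence not a base point of $\mathcal O_X(2K_X)\otimes f^*P$, for $P$ general (depending on $x$). Consequently, for distinct $x_1,x_2\in U_0$ there is a nonempty open subset of $\text{Pic}^0(A)$ over which $\mathcal O_X(2K_X)\otimes f^*Q$ has a section nonvanishing at $x_1$ and one nonvanishing at $x_2$, and, $H^0$ being nonzero, a single section nonvanishing at both (two dense open conditions on the projectivized space of sections); writing $Q=P^{-1}$ this produces $s_2\in H^0(X,\mathcal O_X(2K_X)\otimes f^*P^{-1})$ with $s_2(x_1)\ne 0\ne s_2(x_2)$ for all $P$ in a dense open subset.

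For the second building block, apply Corollary \ref{gfinite} with $r=1$, $\mathcal I=\mathcal I_{x_1}$ and the point $x=x_2$, for distinct $x_1,x_2\in U_0$. Condition $(*)_1$ holds since $x_1\notin B$ (Remark \ref{rmk}), and hypotheses (1), (2) hold because $x_1,x_2\notin B$, $x_1\ne x_2$, and $f(x_2)\in U$. The conclusion then produces a section of $\mathcal F_1\otimes\mathcal I_{x_1}\otimes f^*P$ nonvanishing at $x_2$, whose image under the natural map $\mathcal F_1\otimes\mathcal I_{x_1}\otimes f^*P\to\mathcal O_X(2K_X)\otimes f^*P$ is a section $s_1$ vanishing at $x_1$ and not at $x_2$ (at $x_2\notin B\cup\{x_1\}$ this map is an isomorphism), valid for $P$ in a dense open subset. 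Intersecting with the locus from the first building block, fix such a $P$; then $s_1s_2\in H^0(X,\mathcal O_X(4K_X))$ vanishes at $x_1$ and not at $x_2$, and exchanging $x_1,x_2$ gives the reverse separation, completing the argument modulo hypothesis (3).

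The remaining point --- and, I expect, the main obstacle --- is to verify hypothesis (3) of Corollary \ref{gfinite} (equivalently Theorem \ref{bpf}) in the case $\mathcal I=\mathcal I_{x_1}$: that $H^i(A,f_*(F^e_*\mathcal F_{1,e})_{\mathcal I_{x_1}}\otimes P)=0$ for all $i>0$ and all $P\in\text{Pic}^0(A)$. Pushing the defining sequence $0\to(F^e_*\mathcal F_{1,e})_{\mathcal I_{x_1}}\to F^e_*\mathcal F_{1,e}\to\mathcal F_1\otimes k(x_1)\to0$ forward to $A$ and using Lemma \ref{d-image} together with finiteness of $f$ over $f(x_1)$ gives a short exact sequence $0\to f_*(F^e_*\mathcal F_{1,e})_{\mathcal I_{x_1}}\to f_*(F^e_*\mathcal F_{1,e})\to k(f(x_1))\to0$; the vanishing for $i\ge 2$ is then immediate from Lemma \ref{IT0}, and for $i=1$ it reduces to surjectivity, for every $P$, of $H^0(A,f_*(F^e_*\mathcal F_{1,e})\otimes P)\to k(f(x_1))$ --- equivalently, to the existence for every $P$ of a global section of $\mathcal F_{1,e}\otimes f^*P^{\otimes p^e}$ whose image under the iterated trace map $\text{Tr}^e$ is nonzero at $x_1$. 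Since $\mathcal F_{1,e}$ is, up to the constant vector space $V_{1,e}$, the line bundle $\mathcal O_X((1-t)K_X+mls(K_X-\Delta))$ with $K_X-\Delta\sim_{\mathbb Q}\epsilon H$ ample and $mls\sim p^e$, such sections are to be extracted from Fujita vanishing and the effective generation it yields for bundles with a large ample summand twisted by a nef bundle. The delicate feature, which is exactly where positive characteristic bites, is that one must control the behaviour of $\text{Tr}^e$ on global sections --- not merely the positivity of the pieces of $\mathcal F_{1,e}$ --- and choose $e$ large enough accordingly; in characteristic zero this entire verification is replaced by one application of Nadel vanishing, and that substitution is what costs the passage from $|3K_X|$ to $|4K_X|$.
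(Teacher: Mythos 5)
Your overall architecture (Fourier--Mukai plus Frobenius and Serre/Fujita vanishing, separating points of a dense open set and then tangent directions) matches the paper, but your route to a section of $4K_X$ --- multiplying $s_1\in H^0(\mathcal O_X(2K_X)\otimes f^*P)$ vanishing at $x_1$ but not at $x_2$ with $s_2\in H^0(\mathcal O_X(2K_X)\otimes f^*P^{-1})$ nonvanishing at both --- is a genuinely different decomposition, and it founders on exactly the point you flag at the end. To produce $s_1$ you invoke Corollary \ref{gfinite} with $r=1$ and $\mathcal I=\mathcal I_{x_1}$, whose hypothesis (3) is that $f_*(F^e_*\mathcal F_{1,e})_{\mathcal I_{x_1}}$ satisfies $H^i(A,-\otimes P)=0$ for all $i>0$ and \emph{all} $P\in\mathrm{Pic}^0(A)$; this ``for all'' is essential, since Proposition \ref{hacon} requires the Fourier--Mukai transform to be a sheaf. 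As you correctly compute, the $i=1$ case reduces to surjectivity of $H^0(X,F^e_*\mathcal F_{1,e}\otimes f^*P)\rightarrow k(x_1)$ for \emph{every} $P$. But the only tool available at level $r=1$ is Step 1 itself, which gives that surjectivity only for \emph{general} $P$: the argument is circular, because surjectivity for all $P$ at a given level is precisely what the vanishing hypothesis at that level is meant to deliver.

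Your proposed repair --- Fujita vanishing and effective global generation of $\mathcal F_{1,e}\otimes f^*P^{\otimes p^e}$ --- does not close the gap, because the relevant map is not evaluation of $\mathcal F_{1,e}$ at $x_1$ but the composite with the trace map $F^e_*\mathcal F_{1,e}\rightarrow F^e_*\widetilde{\mathcal F_{1,e}}\rightarrow\mathcal F_1\rightarrow\mathcal F_1\otimes k(x_1)$, and a section of $\mathcal F_{1,e}\otimes f^*P^{\otimes p^e}$ nonvanishing at $x_1$ can perfectly well die under $\mathrm{Tr}^e$. This is the precise difficulty the paper's Step 2 exists to overcome: it upgrades ``surjective for general $P$'' at level $1$ to ``surjective for all $Q$'' at level $2$ by tensoring with the effective ramification divisor $K_X=R_f$ through the auxiliary sheaves $\widetilde{\mathcal F_{1,e}}^-$ and $\mathcal F_{1,e}^-$ (this is where separability of the Albanese map and one extra copy of $K_X$ are spent). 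Only then can the required vanishing for $(F^e_*\mathcal F_{2,e})_{\mathcal I_{x_1}}$ be verified (the paper's Step 3), and the $\mathrm{Pic}^0$-twist is removed not by multiplying opposite twists but by a second application of the same bridge from level $2$ to level $3$ followed by setting $Q=\mathcal O_A$. As written, the existence of $s_1$ (and likewise the tangent-vector analogue) is unproved, and your argument needs to be rerouted through analogues of the paper's Steps 2--4 and 6--7.
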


\begin{proof}
Suppose $A$ is the Albanese variety and $f:X\rightarrow A$ is the
Albanese map. Since $f$ is generically finite, there is an open
subset $U$ of $A$ such that $f$ is finite over $U$. And since $f$ is
separable, we can fix a canonical divisor
$K_X=f^*K_A+R_f=R_f\geqslant 0$. As usual, we let
$\mathcal{F}_r=\mathcal{O}_X((r+1)K_X)\otimes\tau(\|r\Delta\|)$ and
$\mathfrak{a}_m=\mathfrak{bs}(|ml\Delta|)$. We fix a positive
integer $m$ such that
$\tau(\|r\Delta\|)=\tau(\mathfrak{a}_{rm}^{1/ml})$ for $r=1,2,3$ and
define
$$\widetilde{\mathcal{F}_{r,e}}=\mathfrak{a}_{rm}^{\lceil p^e/ml\rceil}\cdot
\mathcal{O}_X((rp^e+1)K_X),$$
$$\mathcal{F}_{r,e}=V_{r,e}\otimes\mathcal{O}_X(-rml\lceil
p^e/ml\rceil\Delta+(rp^e+1)K_X),$$ where $V_{r,e}=\text{Sym}^{\lceil
p^e/ml\rceil}H^0(X,\mathcal{O}_X(rml\Delta))$. Let
$$\widetilde{\mathcal{F}_{1,e}}^-=\mathfrak{a}_m^{\lceil p^e/ml\rceil}\cdot
\mathcal{O}_X(p^eK_X)=\widetilde{\mathcal{F}_{1,e}}\otimes
\mathcal{O}_X(-K_X),$$
$$\mathcal{F}_{1,e}^-=V_{1,e}\otimes\mathcal{O}_X(-ml\lceil
p^e/ml\rceil\Delta+p^eK_X)=\mathcal{F}_{1,e}\otimes
\mathcal{O}_X(-K_X).$$

\vspace{0.3cm}\textbf{Claim:} For $e\gg 0$, we have
$R^if_*(F^e_*\mathcal{F}_{1,e}^-)=0$ and
$H^i(A,f_*(F^e_*\mathcal{F}_{1,e}^-)\otimes P)=0$ for all $i>0$ and
$P\in\text{Pic}^0(A)$.

The claim follows immediately from the proofs of Lemma \ref{d-image}
and \ref{IT0}.

\vspace{0.3cm}We fix an integer $e\gg 0$ such that
$H^i(A,f_*\mathcal{G}\otimes P)=0$ holds for all $i>0$ and all
$\mathcal{G}\in\{\mathcal{F}_{1,e}, \mathcal{F}_{2,e},
\mathcal{F}_{3,e}, \mathcal{F}_{1,e}^-\}$. By a general point $a\in
A$, we mean a point $a\in U$. By a general point $x\in X$, we mean a
point $x\in f^{-1}(U)$ such that $x$ is not in the co-supports of
$\mathfrak{a}_m^{\lceil p^e/ml\rceil}$ or $\tau(\|3\Delta\|)$
(hence, not in the co-supports of $\tau(\|2\Delta\|)$ or
$\tau(\|\Delta\|)$ by Remark \ref{rmk}) and that $x$ is not in the
support of $K_X=R_f$. (It is not hard to see that $x$ is not in the
co-support of $\mathfrak{a}_m$ is equivalent to that $x$ is not in
the co-support of $\mathfrak{a}_m^{\lceil p^e/ml\rceil}$ and implies
that $x$ is not in the co-support of $\tau(\|3\Delta\|)$.)

Our strategy is: First, by Theorem \ref{bpf}, we have that $x$ is
not a base point of $\mathcal{F}_1\otimes f^*P$ for general
$P\in\text{Pic}^0(A)$. Then, by comparing $\mathcal{F}_1$ and
$\mathcal{F}_2$ via $\mathcal{F}_{1,e}^-$, we show that $x$ is not a
base point of $\mathcal{F}_2\otimes f^*P$ for all
$P\in\text{Pic}^0(A)$. Using this fact, we show that
$\mathcal{F}_2\otimes f^*P$ separates points for general
$P\in\text{Pic}^0(A)$. Finally, by comparing $\mathcal{F}_2$ and
$\mathcal{F}_3$ via $\mathcal{F}_{1,e}^-$, we have that
$\mathcal{F}_3\otimes f^*P$ separates points for all
$P\in\text{Pic}^0(A)$. Hence, so does $\mathcal{F}_3$. Following the
same idea, we show that $\mathcal{F}_3$ separates tangent vectors.

\vspace{0.3cm}\emph{Step 1.} By Lemma \ref{IT0} and Corollary
\ref{gfinite} with $r=1$ and $\mathcal{I}=\mathcal{O}_X$, we have
that for general $x\in X$, the homomorphism
$$H^0(X,F^e_*\mathcal{F}_{1,e}\otimes f^*P)\rightarrow
H^0(X,\mathcal{F}_1\otimes f^*P\otimes k(x))\cong k(x)$$ is
surjective for general $P\in \text{Pic}^0(A)$.

\vspace{0.3cm}\emph{Step 2.} We show that for general $x\in X$, the
homomorphism
$$H^0(X,F^e_*\mathcal{F}_{2,e}\otimes f^*Q)\rightarrow
H^0(X,\mathcal{F}_2\otimes f^*Q\otimes k(x))\cong k(x)$$ is
surjective for all $Q\in\text{Pic}^0(A)$.

Let us give a quick explanation of the idea in this step first. We
can pick $P\in \text{Pic}^0(A)$ such that both $P$ and $Q\otimes
P^\vee$ are general. We have already shown that there is a global
section of $F^e_*\mathcal{F}_{1,e}\otimes f^*P$ which induces a
global section of $\mathcal{F}_1\otimes f^*P$ not vanishing at $x$.
Notice that the difference between $\mathcal{F}_1\otimes f^*P$ and
$\mathcal{F}_2\otimes f^*Q$ near a general point $x$ is
$\mathcal{O}_X(K_X)\otimes f^*(Q\otimes P^\vee)$. If we can find a
global section of $\mathcal{O}_X(K_X)$ not vanishing at $x$, we can
obtain a global section of $\mathcal{F}_2\otimes f^*Q$ not vanishing
at $x$. This can be done as $K_X$ is effective. But, unfortunately,
$\mathcal{O}_X(K_X)$ does not behave well globally with the
Frobenius, $\widetilde{\mathcal{F}_{r,e}}$ and $\mathcal{F}_{r,e}$.
We have to introduce $\widetilde{\mathcal{F}_{1,e}}^-$ and
$\mathcal{F}_{1,e}^-$ as the bridge from
$\widetilde{\mathcal{F}_{1,e}}$ to $\widetilde{\mathcal{F}_{2,e}}$
and $\mathcal{F}_{1,e}$ to $\mathcal{F}_{2,e}$, respectively. The
induced map $F^e_*\mathcal{F}_{1,e}\rightarrow
F^e_*\mathcal{F}_{2,e}$ is commutative with
$\mathcal{F}_1\rightarrow \mathcal{O}_X(K_X)\otimes
\mathcal{F}_1\cong \mathcal{F}_2$ near a general point $x$ by the
projection formula. Hence, we view $F^e_*\mathcal{F}_{1,e}^-$ as
giving a homomorphism $\mathcal{F}_1\otimes k(x)\rightarrow
\mathcal{F}_2\otimes k(x)$. We only need to show that there is a
global section of $F^e_*\mathcal{F}_{1,e}^-$ inducing a non-zero
homomorphism $\mathcal{F}_1\otimes k(x)\rightarrow
\mathcal{F}_2\otimes k(x)$. Here is the detailed proof.

Since $\mathfrak{a}_m^{\lceil p^e/ml\rceil}$ is an ideal, we have an
inclusion $\widetilde{\mathcal{F}_{1,e}}^-\rightarrow
\mathcal{O}_X(p^eK_X)$. Tensoring with the vector bundle
$\mathcal{F}_{1,e}$, we have an inclusion
$$\widetilde{\mathcal{F}_{1,e}}^-\otimes\mathcal{F}_{1,e}\rightarrow
\mathcal{O}_X(p^eK_X)\otimes\mathcal{F}_{1,e},$$ whose cokernel is
supported on the co-support of $\mathfrak{a}_m^{\lceil
p^e/ml\rceil}$. Pushing forward by the Frobenius, we get another
inclusion
$$F^e_*(\widetilde{\mathcal{F}_{1,e}}^-\otimes\mathcal{F}_{1,e})\rightarrow
F^e_*(\mathcal{O}_X(p^eK_X)\otimes\mathcal{F}_{1,e})\cong
\mathcal{O}_X(K_X)\otimes F^e_*(\mathcal{F}_{1,e}),$$ whose cokernel
is still supported on the co-support of $\mathfrak{a}_m^{\lceil
p^e/ml\rceil}$. Hence, the induced morphism
$$\alpha:F^e_*(\widetilde{\mathcal{F}_{1,e}}^-\otimes\mathcal{F}_{1,e})\otimes k(x)\rightarrow
\mathcal{O}_X(K_X)\otimes F^e_*(\mathcal{F}_{1,e})\otimes k(x)$$ is
an isomorphism providing that $x$ is general. Since
$\mathfrak{a}_m^2\subseteq \mathfrak{a}_{2m}$, we have a morphism
$\widetilde{\mathcal{F}_{1,e}}^-\otimes\widetilde{\mathcal{F}_{1,e}}\rightarrow
\widetilde{\mathcal{F}_{2,e}}$. Combining with the morphism
$\mathcal{F}_{1,e}\rightarrow \widetilde{\mathcal{F}_{1,e}}$, we
have that
$$\widetilde{\mathcal{F}_{1,e}}^-\otimes\mathcal{F}_{1,e}\rightarrow
\widetilde{\mathcal{F}_{1,e}}^-\otimes\widetilde{\mathcal{F}_{1,e}}\rightarrow
\widetilde{\mathcal{F}_{2,e}}.$$

On the other hand, since $\tau(\|\Delta\|)$ and $\tau(\|2\Delta\|)$
are ideals, the induced inclusions $\mathcal{F}_1\otimes
k(x)\rightarrow \mathcal{O}_X(2K_X)\otimes k(x)$ and
$\mathcal{F}_2\otimes k(x)\rightarrow \mathcal{O}_X(3K_X)\otimes
k(x)$ are both isomorphisms providing that $x$ is general. Hence,
there is a morphism $$\mathcal{F}_1\otimes k(x)\rightarrow
\mathcal{O}_X(K_X)\otimes\mathcal{F}_1\otimes k(x) \cong
\mathcal{O}_X(3K_X)\otimes k(x)\cong \mathcal{F}_2\otimes k(x).$$

Combining the discussion above and the trace maps
$F^e_*\mathcal{F}_{r,e}\rightarrow
F^e_*\widetilde{\mathcal{F}_{r,e}}\rightarrow \mathcal{F}_r$, we
have the following commutative diagram: {\tiny $$\xymatrix{
F^e_*\mathcal{F}_{1,e}\otimes k(x) \ar[r]\ar@{->>}[d] &
\mathcal{O}_X(K_X)\otimes F^e_*\mathcal{F}_{1,e}\otimes k(x)
\ar[r]_-{\alpha^{-1}}^-\simeq\ar@{->>}[d] &
F^e_*(\widetilde{\mathcal{F}_{1,e}}^-\otimes\mathcal{F}_{1,e})\otimes
k(x) \ar[r]\ar@{->>}[d] & F^e_*\widetilde{\mathcal{F}_{2,e}}\otimes
k(x)
\ar@{->>}[d] \\
\mathcal{F}_1\otimes k(x) \ar[r]^-\simeq &
\mathcal{O}_X(K_X)\otimes\mathcal{F}_1\otimes k(x) \ar@{=}[r] &
\mathcal{O}_X(K_X)\otimes\mathcal{F}_1\otimes k(x) \ar[r]^-\simeq &
\mathcal{F}_2\otimes k(x). }$$} The surjectivities of the first,
second and last vertical maps are induced by the surjectivity of
$F^e_*\mathcal{F}_{r,e}\rightarrow \mathcal{F}_r$. The third map is
the same as the second map.

Noticing that $\mathcal{F}_{1,e}$ is a vector bundle, we have that
the morphism
$\widetilde{\mathcal{F}_{1,e}}^-\otimes\mathcal{F}_{1,e}\rightarrow
\widetilde{\mathcal{F}_{2,e}}$ is equivalent to a morphism
$\widetilde{\mathcal{F}_{1,e}}^-\rightarrow
\mathcal{H}om_{\mathcal{O}_X}(\mathcal{F}_{1,e},\widetilde{\mathcal{F}_{2,e}})$.
We have $$\begin{array}{r@{\,\rightarrow\,}l}
\vspace{0.2cm}F^e_*\widetilde{\mathcal{F}_{1,e}}^- &
F^e_*\mathcal{H}om_{\mathcal{O}_X}(\mathcal{F}_{1,e},\widetilde{\mathcal{F}_{2,e}})
\\
\vspace{0.2cm}& \mathcal{H}om_{\mathcal{O}_X}(F^e_*\mathcal{F}_{1,e},F^e_*\widetilde{\mathcal{F}_{2,e}}) \\
\vspace{0.2cm}&
\mathcal{H}om_{\mathcal{O}_X}(F^e_*\mathcal{F}_{1,e}\otimes
k(x),F^e_*\widetilde{\mathcal{F}_{2,e}}\otimes k(x))
\end{array}$$ which, by construction, is how $F^e_*\widetilde{\mathcal{F}_{1,e}}^-$ induces the top row of the
commutative diagram above. This induces a morphism between
$\mathcal{F}_1\otimes k(x)$ and $\mathcal{F}_2\otimes k(x)$. Indeed,
for any $a\in \mathcal{F}_1\otimes k(x)$, we have some $b\in
F^e_*\mathcal{F}_{1,e}\otimes k(x)$ (maybe not unique) mapped to $a$
by the first vertical arrow in the commutative diagram. Applying the
top row induced by $F^e_*\widetilde{\mathcal{F}_{1,e}}^-$ and then
the last vertical arrow on $b$, we get some $c\in
\mathcal{F}_2\otimes k(x)$ which is independent of the choice of $b$
since the diagram commutes. Hence, we have a morphism
$$F^e_*\widetilde{\mathcal{F}_{1,e}}^- \rightarrow
\mathcal{H}om_{\mathcal{O}_X}(\mathcal{F}_1\otimes
k(x),\mathcal{F}_2\otimes k(x))\cong k(x).$$

\begin{remark}
We should point out that, on any affine open set $V$ in $X$, the map
that we constructed above between $F^e_*\mathcal{F}_{1,e}$ and
$F^e_*\widetilde{\mathcal{F}_{2,e}}$ is not
$$F^e_*\mathcal{F}_{1,e}\rightarrow
F^e_*\widetilde{\mathcal{F}_{1,e}}^- \otimes F^e_*\mathcal{F}_{1,e}
\rightarrow F^e_*(\widetilde{\mathcal{F}_{1,e}}^- \otimes
\mathcal{F}_{1,e}) \rightarrow F^e_*\widetilde{\mathcal{F}_{2,e}},$$
where the second map is the natural morphism of pushing forward a
tensor product. The map that we constructed is
$$F^e_*\mathcal{F}_{1,e}\rightarrow F^e_*(\widetilde{\mathcal{F}_{1,e}}^- \otimes
\mathcal{F}_{1,e})\rightarrow F^e_*\widetilde{\mathcal{F}_{2,e}},$$
where the first map is by pushing forward
$\mathcal{F}_{1,e}\rightarrow \widetilde{\mathcal{F}_{1,e}}^-
\otimes \mathcal{F}_{1,e}$.
\end{remark}

Assuming that $x$ is not in the support of the effective divisor
$K_X$ that we fixed as the ramification divisor before, since the
bottom row of the commutative diagram are all isomorphisms in this
case, the morphism above $F^e_*\widetilde{\mathcal{F}_{1,e}}^-
\rightarrow k(x)$ is nonzero, and hence surjective. Combining with
the surjection $\mathcal{F}_{1,e}^-\rightarrow
\widetilde{\mathcal{F}_{1,e}}^-$, we have the following surjection:
$$F^e_*\mathcal{F}_{1,e}^-\rightarrow \mathcal{H}om_{\mathcal{O}_X}(\mathcal{F}_1\otimes
k(x),\mathcal{F}_2\otimes k(x))\cong k(x).$$

For any $Q\in \text{Pic}^0(A)$, we can pick $P\in \text{Pic}^0(A)$
such that $P$ and $Q\otimes P^\vee$ are both general. Applying
Corollary \ref{gfinite} and Remark \ref{var_gfinite} to the
surjection $F^e_*\mathcal{F}_{1,e}^-\rightarrow
\mathcal{H}om_{\mathcal{O}_X}(\mathcal{F}_1\otimes
k(x),\mathcal{F}_2\otimes k(x))$, since $Q\otimes P^\vee$ is
general, we get a surjection
$$H^0(X,F^e_*\mathcal{F}_{1,e}^-\otimes f^*(Q\otimes
P^\vee))\rightarrow \text{Hom}_{\mathcal{O}_X}(\mathcal{F}_1\otimes
f^*P\otimes k(x),\mathcal{F}_2\otimes f^*Q\otimes k(x)).$$ Combining
with the fact from Step 1, that $H^0(X,F^e_*\mathcal{F}_{1,e}\otimes
f^*P)\rightarrow H^0(X,\mathcal{F}_1\otimes f^*P\otimes k(x))$ is
surjective, we have a surjection
$$H^0(X,F^e_*\mathcal{F}_{1,e}^-\otimes F^e_*\mathcal{F}_{1,e}\otimes
f^*Q)\rightarrow H^0(X,\mathcal{F}_2\otimes f^*Q\otimes k(x)).$$
Notice that there is a natural commutative diagram $$\xymatrix{
\mathcal{F}_{1,e}^-\otimes \mathcal{F}_{1,e} \ar[r]\ar[d] &
\mathcal{F}_{2,e} \ar[dd]\\
\widetilde{\mathcal{F}_{1,e}}^-\otimes \mathcal{F}_{1,e} \ar[d] & \\
\widetilde{\mathcal{F}_{1,e}}^-\otimes \widetilde{\mathcal{F}_{1,e}}
\ar[r] & \widetilde{\mathcal{F}_{2,e}} }$$ By construction,
$F^e_*\mathcal{F}_{1,e}^-\otimes F^e_*\mathcal{F}_{1,e}\rightarrow
\mathcal{F}_2$ factors through $F^e_*\mathcal{F}_{2,e}$. Therefore,
we have that the homomorphism
$$H^0(X,F^e_*\mathcal{F}_{2,e}\otimes f^*Q)\rightarrow
H^0(X,\mathcal{F}_2\otimes f^*Q\otimes k(x))$$ is surjective for all
$Q\in \text{Pic}^0(A)$.

\vspace{0.3cm}\emph{Step 3.} We show that for general $x_1, x_2\in
X$ and general $P\in \text{Pic}^0(A)$, we can find a section in
$F^e_*\mathcal{F}_{2,e}\otimes f^*P$ which induces a section in
$\mathcal{F}_2\otimes f^*P$ vanishing at $x_1$ but not at $x_2$.

We only need to show that the map
$$H^0(X,(F^e_*\mathcal{F}_{2,e})_{\mathcal{I}_{x_1}}\otimes
f^*P)\rightarrow H^0(X,\mathcal{F}_2\otimes \mathcal{I}_{x_1}\otimes
f^*P\otimes k(x_2))\cong k(x_2)$$ is surjective. Noticing that $x_1$
is not in the co-support of $\tau(\|2\Delta\|)$, we have that
$\mathcal{I}_{x_1}$ satisfies $(*)_2$. Applying Corollary
\ref{gfinite} with $r=2$ and $\mathcal{I}=\mathcal{I}_{x_1}$, it
suffices to check
$H^i(A,f_*(F^e_*\mathcal{F}_{2,e})_{\mathcal{I}_{x_1}}\otimes P)=0$
for all $i>0$ and all $P\in\text{Pic}^0(A)$.

First we show that
$R^if_*(F^e_*\mathcal{F}_{2,e})_{\mathcal{I}_{x_1}}=0$ for $i>0$.
Pushing forward the exact sequence
$$0\rightarrow(F^e_*\mathcal{F}_{2,e})_{\mathcal{I}_{x_1}}\rightarrow F^e_*\mathcal{F}_{2,e}\rightarrow
\mathcal{F}_2\otimes k(x_1)\rightarrow 0$$ gives
$$f_*(F^e_*\mathcal{F}_{2,e})\rightarrow k(a_1)\rightarrow R^1f_*(F^e_*\mathcal{F}_{2,e})_{\mathcal{I}_{x_1}}\rightarrow
R^1f_*(F^e_*\mathcal{F}_{2,e})=0$$ and
$$R^if_*(F^e_*\mathcal{F}_{2,e})_{\mathcal{I}_{x_1}}\cong
R^if_*(F^e_*\mathcal{F}_{2,e})=0$$ for all $i\geqslant 2$, where
$a_1=f(x_1)$ and the vanishings follow from Lemma \ref{d-image}. As
in the proof of Theorem \ref{bpf} (notice that
$(F^e_*\mathcal{F}_{2,e})_{\mathcal{I}_{x_1}}=(F^e_*\mathcal{F}_{2,e})_{\mathcal{O}_X,x_1}$),
one sees that $f_*(F^e_*\mathcal{F}_{2,e})\rightarrow k(a_1)$ is
surjective, so
$R^1f_*(F^e_*\mathcal{F}_{2,e})_{\mathcal{I}_{x_1}}=0$.

Now, by a spectral sequence argument, we only need to show that
$H^i(X,(F^e_*\mathcal{F}_{2,e})_{\mathcal{I}_{x_1}}\otimes f^*P)=0$
for all $i>0$ and all $P\in\text{Pic}^0(A)$. We have the short exact
sequence
$$0\rightarrow(F^e_*\mathcal{F}_{2,e})_{\mathcal{I}_{x_1}}\otimes f^*P
\rightarrow F^e_*\mathcal{F}_{2,e}\otimes f^*P\rightarrow
\mathcal{F}_2\otimes f^*P \otimes k(x_1)\rightarrow 0.$$ By taking
the cohomology, we have
$$H^0(X,F^e_*\mathcal{F}_{2,e}\otimes f^*P)\rightarrow
k(x_1)\rightarrow
H^1(X,(F^e_*\mathcal{F}_{2,e})_{\mathcal{I}_{x_1}}\otimes
f^*P)\rightarrow H^1(X,F^e_*\mathcal{F}_{2,e}\otimes f^*P)=0$$ and
$$H^i(X,(F^e_*\mathcal{F}_{2,e})_{\mathcal{I}_{x_1}}\otimes f^*P)\cong
H^i(X,F^e_*\mathcal{F}_{2,e}\otimes f^*P)=0,$$ for all $i\geqslant
2$ where the vanishings follow from Lemma \ref{IT0}. Since by Step
2, $H^0(X,F^e_*\mathcal{F}_{2,e}\otimes f^*P)\rightarrow k(x_1)$ is
surjective, we have
$H^1(X,(F^e_*\mathcal{F}_{2,e})_{\mathcal{I}_{x_1}}\otimes f^*P)=0$.

\vspace{0.3cm}\emph{Step 4.} We show that for general $x_1, x_2\in
X$ and all $Q\in \text{Pic}^0(A)$, we can find a section in
$F^e_*\mathcal{F}_{3,e}\otimes f^*Q$ which induces a section in
$\mathcal{F}_3\otimes f^*Q$ vanishing at $x_1$ but not at $x_2$.

For any general points $x_1$ and $x_2$ and any
$Q\in\text{Pic}^0(A)$, we may pick $P\in\text{Pic}^0(A)$ such that
$P$ and $Q\otimes P^\vee$ are both general. Similar to Step 2, for
$i=1$ or 2, we have the following commutative diagram: {\tiny
$$\xymatrix{ F^e_*\mathcal{F}_{2,e}\otimes k(x_i) \ar[r]\ar[d] &
\mathcal{O}_X(K_X)\otimes F^e_*\mathcal{F}_{2,e}\otimes k(x_i)
\ar[r]^-\simeq\ar[d] &
F^e_*(\widetilde{\mathcal{F}_{1,e}}^-\otimes\mathcal{F}_{2,e})\otimes
k(x_i) \ar[r]\ar[d] & F^e_*\widetilde{\mathcal{F}_{3,e}}\otimes
k(x_i)
\ar[d] \\
\mathcal{F}_2\otimes k(x_i) \ar[r]^-\simeq &
\mathcal{O}_X(K_X)\otimes\mathcal{F}_2\otimes k(x_i) \ar@{=}[r] &
\mathcal{O}_X(K_X)\otimes\mathcal{F}_2\otimes k(x_i) \ar[r]^-\simeq
& \mathcal{F}_3\otimes k(x_i). }$$} We have that
$$F^e_*\mathcal{F}_{1,e}^- \rightarrow
\mathcal{H}om_{\mathcal{O}_X}(\mathcal{F}_2\otimes
k(x_i),\mathcal{F}_3\otimes k(x_i)) \cong k(x_i)$$ is surjective. We
may apply Corollary \ref{gfinite} and Remark \ref{var_gfinite} and
get that
$$H^0(X,F^e_*\mathcal{F}_{1,e}^-\otimes f^*(Q\otimes
P^\vee))\rightarrow \text{Hom}_{\mathcal{O}_X}(\mathcal{F}_2\otimes
f^*P\otimes k(x_2),\mathcal{F}_3\otimes f^*Q\otimes k(x_2))$$ is
surjective.

By Step 3, we have a section $s\in H^0(X,
F^e_*\mathcal{F}_{2,e}\otimes f^*P)$ restricting to 0 in
$\mathcal{F}_2\otimes f^*P\otimes k(x_1)$ and to nonzero in
$\mathcal{F}_2\otimes f^*P\otimes k(x_2)$. By the discussion above,
we have a section $s^-\in H^0(X,F^e_*\mathcal{F}_{1,e}^-\otimes
f^*(Q\otimes P^\vee))$ inducing a nonzero homomorphism between
$\mathcal{F}_2\otimes f^*P\otimes k(x_2)$ and $\mathcal{F}_3\otimes
f^*Q\otimes k(x_2)$. Hence, $s^-\otimes s$ gives a section in
$H^0(X, F^e_*\mathcal{F}_{3,e}\otimes f^*Q)$ restricting to 0 in
$\mathcal{F}_3\otimes f^*Q\otimes k(x_1)$ and to nonzero in
$\mathcal{F}_3\otimes f^*Q\otimes k(x_2)$.

\vspace{0.3cm}\emph{Step 5.} By Step 4, for all
$Q\in\text{Pic}^0(A)$, we have a surjection
$$H^0(X,F^e_*\mathcal{F}_{3,e}\otimes f^*Q)\rightarrow
H^0(X,\mathcal{F}_3\otimes f^*Q\otimes k(x_1,x_2)),$$ where
$k(x_1,x_2)$ is the skyscraper sheaf supported on $\{x_1,x_2\}$.
Since this surjection factors through $H^0(X,\mathcal{F}_3\otimes
f^*Q)$, we have that $\mathcal{F}_3\otimes f^*Q$ separates general
points for all $Q\in\text{Pic}^0(A)$.

\vspace{0.3cm}\emph{Step 6.} We show that for general $x\in X$, any
irreducible length two zero dimensional scheme $z$ with support $x$
and general $P\in\text{Pic}^0(A)$, we can find a section in
$(F^e_*\mathcal{F}_{2,e})_{\mathcal{I}_x}\otimes f^*P$ which induces
a section in $\mathcal{F}_2\otimes f^*P\otimes \mathcal{I}_x$ not
vanishing at $z$.

Let $r\in\{2,3\}$ and $\mathcal{I}_z$ be the ideal sheaf of $z$ in
$X$. Since $x$ is not in the co-support of $\tau(\|r\Delta\|)$, the
natural map $\mathcal{F}_r\otimes \mathcal{I}_x\rightarrow
\mathcal{F}_r\otimes \mathcal{I}_x/\mathcal{I}_z$ is surjective with
kernel $\mathcal{F}_r\otimes \mathcal{I}_z$. Recall that we have a
surjection $(F^e_*\mathcal{F}_{r,e})_{\mathcal{I}_x}\rightarrow
\mathcal{F}_r\otimes \mathcal{I}_x$. Hence the composition
$$(F^e_*\mathcal{F}_{r,e})_{\mathcal{I}_x}\rightarrow
\mathcal{F}_r\otimes \mathcal{I}_x\rightarrow \mathcal{F}_r\otimes
\mathcal{I}_x/\mathcal{I}_z$$ is surjective. We define
$(F^e_*\mathcal{F}_{r,e})_{\mathcal{I}_x,z}$ as the kernel of the
composition above. Since the composition
$(F^e_*\mathcal{F}_{r,e})_{\mathcal{I}_x,z}\rightarrow
(F^e_*\mathcal{F}_{r,e})_{\mathcal{I}_x}\rightarrow
\mathcal{F}_r\otimes\mathcal{I}_x\rightarrow
\mathcal{F}_r\otimes\mathcal{I}_x/\mathcal{I}_z$ is 0, it factors
through $\mathcal{F}_r\otimes \mathcal{I}_z$. By the 5-lemma, the
induced map $(F^e_*\mathcal{F}_{r,e})_{\mathcal{I}_x,z}\rightarrow
\mathcal{F}_r\otimes \mathcal{I}_z$ is surjective. This is
summarized in the following commutative diagram.

$$\xymatrix{
0 \ar[r] & (F^e_*\mathcal{F}_{r,e})_{\mathcal{I}_x,z}
\ar[r]\ar@{->>}[d] & (F^e_*\mathcal{F}_{r,e})_{\mathcal{I}_x}
\ar[r]\ar@{->>}[d] & \mathcal{F}_r\otimes
\mathcal{I}_x/\mathcal{I}_z
\ar[r]\ar@{=}[d] & 0 \\
0 \ar[r] & \mathcal{F}_r\otimes \mathcal{I}_z \ar[r] &
\mathcal{F}_r\otimes \mathcal{I}_x \ar[r] & \mathcal{F}_r\otimes
\mathcal{I}_x/\mathcal{I}_z \ar[r] & 0 }$$

To show the claim at the beginning of this step, we only need to
show that the map
$$H^0(X,(F^e_*\mathcal{F}_{2,e})_{\mathcal{I}_x}\otimes f^*P)\rightarrow
H^0(X,\mathcal{F}_2\otimes\mathcal{I}_x/\mathcal{I}_z\otimes
f^*P)\cong k(x)$$ is surjective. Suppose $a=f(x)$ and $t=f(z)$.
Since $f$ is separable and $x$ is not in the co-support of
$\tau(\|2\Delta\|)$, we have that
$f_*(\mathcal{F}_2\otimes\mathcal{I}_x/\mathcal{I}_z)\cong k(a)$
which is the trivial skyscraper sheaf at $a$. Noticing that
$\mathcal{I}_x$ satisfies $(*)_2$, it is not hard to see that the
proof of Theorem \ref{bpf} and Corollary \ref{gfinite} still works
for $r=2$, $\mathcal{I}=\mathcal{I}_x$ and the surjection
$(F^e_*\mathcal{F}_{2,e})_{\mathcal{I}_x}\rightarrow
\mathcal{F}_2\otimes\mathcal{I}_x/\mathcal{I}_z\cong k(x)$. The
required vanishing
$H^i(A,f_*(F^e_*\mathcal{F}_{2,e})_{\mathcal{I}_x}\otimes P)=0$ for
all $i>0$ and all $P\in \text{Pic}^0(A)$ is shown in Step 3.

\vspace{0.3cm}\emph{Step 7.} We show that for general $x\in X$, any
irreducible length two zero dimensional scheme $z$ with support $x$
and all $Q\in\text{Pic}^0(A)$, we can find a section in
$(F^e_*\mathcal{F}_{3,e})_{\mathcal{I}_x}\otimes f^*Q$ which induces
a section in $\mathcal{F}_3\otimes f^*P\otimes \mathcal{I}_x$ not
vanishing at $z$.

Let the kernel of the composition
$F^e_*\widetilde{\mathcal{F}_{3,e}}\rightarrow
\mathcal{F}_3\rightarrow \mathcal{F}_3\otimes k(x)$ be
$(F^e_*\widetilde{\mathcal{F}_{3,e}})_{\mathcal{I}_x}$. As in Step 2
and Step 4, near a general point $x$,
$F^e_*\widetilde{\mathcal{F}_{1,e}}^-$ induces homomorphisms from
$F^e_*\mathcal{F}_{2,e}$ to $F^e_*\widetilde{\mathcal{F}_{3,e}}$
which is commutative with the homomorphisms from $\mathcal{F}_2$ to
$\mathcal{F}_3$ induced by $\mathcal{O}_X(K_X)$. Hence
$F^e_*\widetilde{\mathcal{F}_{1,e}}^-$ induces homomorphisms between
the kernels of $F^e_*\mathcal{F}_{r,e}\rightarrow
\mathcal{F}_r\otimes k(x)$, i.e, from
$(F^e_*\mathcal{F}_{2,e})_{\mathcal{I}_x}$ to
$(F^e_*\widetilde{\mathcal{F}_{3,e}})_{\mathcal{I}_x}$. As the
homomorphism induced by $F^e_*\widetilde{\mathcal{F}_{1,e}}^-$ and
$\mathcal{O}_X(K_X)$ is commutative, we have the following
commutative diagram near a general point $x$.
$$\xymatrix{
(F^e_*\mathcal{F}_{2,e})_{\mathcal{I}_x} \ar[rr] \ar[d]& &
(F^e_*\widetilde{\mathcal{F}_{3,e}})_{\mathcal{I}_x} \ar[d] \\
\mathcal{F}_2\otimes \mathcal{I}_x/\mathcal{I}_z \ar[r] &
\mathcal{O}_X(K_X)\otimes \mathcal{F}_2\otimes
\mathcal{I}_x/\mathcal{I}_z \ar^-\simeq[r] & \mathcal{F}_3\otimes
\mathcal{I}_x/\mathcal{I}_z }$$

For any $Q\in\text{Pic}^0(A)$, we may pick $P\in\text{Pic}^0(A)$
such that $P$ and $Q\otimes P^\vee$ are both general. Similar to
Step 2 and Step 4, we have that
$$F^e_*\mathcal{F}_{1,e}^- \rightarrow
F^e_*\widetilde{\mathcal{F}_{1,e}}^- \rightarrow
\mathcal{H}om_{\mathcal{O}_X}(\mathcal{F}_2\otimes
\mathcal{I}_x/\mathcal{I}_z,\mathcal{F}_3\otimes
\mathcal{I}_x/\mathcal{I}_z) \cong k(x)$$ is surjective. We may
apply Corollary \ref{gfinite} and Remark \ref{var_gfinite} and get
that
$$H^0(X,F^e_*\mathcal{F}_{1,e}^-\otimes f^*(Q\otimes
P^\vee))\rightarrow \text{Hom}_{\mathcal{O}_X}(\mathcal{F}_2\otimes
f^*P\otimes \mathcal{I}_x/\mathcal{I}_z,\mathcal{F}_3\otimes
f^*Q\otimes \mathcal{I}_x/\mathcal{I}_z)$$ is surjective.

By Step 6, we have a section $s\in H^0(X,
F^e_*\mathcal{F}_{2,e}\otimes f^*P)$ restricting to 0 in
$\mathcal{F}_2\otimes f^*P\otimes \mathcal{O}_X/\mathcal{I}_x$ and
whose image in $\mathcal{F}_2\otimes f^*P\otimes
\mathcal{I}_x/\mathcal{I}_z$ does not vanish. By the discussion
above, we have a section $s^-\in
H^0(X,F^e_*\mathcal{F}_{1,e}^-\otimes f^*(Q\otimes P^\vee))$
inducing a nonzero homomorphism between $\mathcal{F}_2\otimes
f^*P\otimes \mathcal{I}_x/\mathcal{I}_z$ and $\mathcal{F}_3\otimes
f^*Q\otimes \mathcal{I}_x/\mathcal{I}_z$. Hence, $s^-\otimes s$
gives a section in $H^0(X, F^e_*\mathcal{F}_{3,e}\otimes f^*Q)$
restricting to 0 in $\mathcal{F}_3\otimes f^*Q\otimes
\mathcal{O}_X/\mathcal{I}_x$ and whose image in
$\mathcal{F}_3\otimes f^*Q\otimes \mathcal{I}_x/\mathcal{I}_z$ does
not vanish.

\vspace{0.3cm}\emph{Step 8.} By Step 7, for all
$Q\in\text{Pic}^0(A)$, we have a surjection
$$H^0(X,(F^e_*\mathcal{F}_{3,e})_{\mathcal{I}_x}\otimes f^*Q)\rightarrow
H^0(X,\mathcal{F}_3\otimes f^*Q\otimes
\mathcal{I}_x/\mathcal{I}_z).$$ Since this surjection factors
through $H^0(X,\mathcal{F}_3\otimes f^*Q\otimes \mathcal{I}_x)$, we
have that $\mathcal{F}_3\otimes f^*Q$ separates tangent vectors at
general points for all $Q\in\text{Pic}^0(A)$.

\vspace{0.3cm}Since
$\mathcal{F}_3=\mathcal{O}_X(4K_X)\otimes\tau(\|3\Delta\|)$ and
$\tau(\|3\Delta\|)$ is an ideal, we can conclude that $|4K_X|$
induces a birational map.

\end{proof}

\end{document}